\newtheorem{satz}{Theorem}
\newtheorem{assumption}[theorem]{Assumption}
\newtheorem{korollar}[theorem]{Corollary}
\DeclareMathOperator{\R}{\mathbb{R}}
\newcommand{\dx}{\, d x}
\newcommand{\JJ}{\widehat J}
\newcommand{\Ug}{U_{\partial \Omega}}
\newcommand{\we}{w^{\varepsilon}}
\title{A note on adjoint error estimation for one-dimensional stationary balance laws with shocks.}
\author{Jochen Sch\"utz, Sebastian Noelle, Christina Steiner and  Georg May}
\begin{document}

\maketitle

\begin{abstract}
We consider one-dimensional steady-state balance laws with discontinuous solutions.
Giles and Pierce \cite{GP00} realized that a shock leads to a new term in the adjoint error representation
for target functionals. This term disappears if and only if the adjoint solution satisfies
an {\em internal boundary condition}.
Curiously, most computer codes implementing adjoint error estimation ignore the new term in the functional, as well as the internal adjoint boundary condition. 
The purpose of this note is to justify this omission as follows: if one represents the exact forward and adjoint solutions as vanishing viscosity limits of the corresponding viscous problems, then the internal boundary condition is naturally satisfied in the limit.
\end{abstract}

\begin{keywords} 
  Adjoint Error Control, Conservation Laws, Discontinuities,  Interior Boundary Condition
\end{keywords}

\begin{AMS}
  65N15, 35L65, 35L67
\end{AMS}

\pagestyle{myheadings}
\thispagestyle{plain}
\markboth{SCH\"UTZ ET AL.}{Adjoint Error Estimation with Shocks}

\section{Introduction}
\label{sec:introduction}
We consider stationary one-dimensional conservation laws with source term, also called balance laws \cite{BalanceLaws10},
\begin{align}
 \label{eq:conservation_law}
 f(w)_x + S(w) &= 0 \quad \forall x \in \Omega,
\end{align}
equipped with in- and outflow boundary conditions. Both $f: \R^d \rightarrow \R^d$ and $S: \R^d \rightarrow \R^d$ are given smooth functions, and $\Omega \subset \R$ is a one-dimensional domain. 
One particular example of \eqref{eq:conservation_law} is one-dimensional nozzle flow \cite{AN}. 
In many applications, the user is interested in the value of so-called {\em target functionals}, such as lift and drag coefficients in aerodynamics. In this context, for a given smooth function $p$ of the solution $w$, we consider the functional $\JJ(w) := \int_{\Omega} p(w) \dx$. For smooth exact, respectively approximate, solutions $w$ and $v$, the error
\begin{align}
 \mathcal{E}(v,w) &:= \JJ(v) - \JJ(w)
\intertext{
in the target functional $\JJ$ is given by 
}
 \label{fehler_darstellung}
 \mathcal{E}(v,w) &= \mathcal{R}(z(w),v) + \mathcal{H}(z(w),v,w),
\intertext{
where $\mathcal{H}(z(w),v,w)$ is a higher order term to be discussed in section \ref{sec:adjoint_shock} below, and
}
 \mathcal{R}(z,v) &:= \int_{\Omega} z^T (f(v)_x + S(v)) \ dx
\end{align}
is the inner product of the residual of the approximate solution $v$  and an adjoint solution $z \equiv z(w)$, which is implicitly defined by the system of equations
\begin{alignat}{2}
 \label{eq:adjoint}
 -f'(w)^T z_x + S'(w)^T z &= p'(w) &\quad& \forall x \in \Omega, 
\end{alignat}
subject to suitable boundary conditions. 
The clue of this error representation is that $\mathcal{R}(z,v)$ does not (directly) depend on the unknown solution $w$ and can thus be evaluated numerically, provided one approximates $z$ suitably. Therefore, neglecting $\mathcal{H}$, and localizing the terms in the inner product $\mathcal{R}$, one obtains an a-posteriori error estimate, which can be used to refine the grid in such a way that the target functional is computed accurately at low cost. This strategy has been used by many authors, in particular for steady state computations (see, e.g., \cite{BR97, RHa06, VD2000} and the references therein). 

Giles and Pierce \cite{GP97, GP00} generalized this framework to solutions $w$ and $v$ with 
shocks, and found the  error representation 
\begin{align}
\label{fehler_darstellung_mit_interner_rb}
 \mathcal{E}(v,w) &= \mathcal{R}(z,v) + \bar\alpha\,\mathcal{I}(z,w) + \mathcal{H}(z,v,w) 
\end{align}
where the new term consists of the product of the error in the shock location
$\overline\alpha\equiv\overline\alpha(w,v)$ and the jump term
\begin{align}
 \label{eq:intbc_intro}
 \mathcal{I}(z,w) & :=  - z^T(\alpha) [f(w)_x] - [p(w)].
\end{align}
Here $\alpha\equiv\alpha(w)$ is the shock location, and $[\cdot]$ denotes the jump of a quantity across the shock. 

It is not obvious whether \eqref{eq:intbc_intro} is zero. In the context of time-dependent conservation laws, Giles and Ulbrich \cite{GU08, GU08Teil2} proved convergence of a numerically computed $z_h$ towards $z$ in the framework of a Finite Volume scheme, while in the stationary case, Sch\"utz et al. \cite{SMN10} studied convergence in the context of a Discontinuous Galerkin scheme. The latter result indicates that at least $\mathcal{I}(z_h,w_h)$ converges to zero. In this paper, we will show that under certain conditions, $\mathcal{I}(z,w)$ is indeed zero.

The internal error term presents a serious obstacle to a-posteriori adjoint
error control, since the exact solution, the shock position and the error in the shock location are not known from the data of the computation, and hence the internal error cannot be evaluated. Therefore, in practical computations,  $\bar\alpha\,\mathcal{I}$ is usually neglected. Perhaps surprisingly, this leads to successful adaptive schemes. The aim of this paper is to show analytically that this omission is justified. 

The paper proceeds as follows: In section~\ref{sec:adjoint_shock} we present an alternative, and more detailed, derivation of the rather subtle error representation \eqref{fehler_darstellung_mit_interner_rb} for piecewise smooth solutions.
In section~\ref{sec:convergence_intbc}, we prove that vanishing viscosity solutions $w$ and $z$, provided that $z$ is smooth, satisfy what is called the {\em internal boundary condition}
\begin{align}
 z^T(\alpha) [f(w)_x] = - [p(w)].
\end{align}
Therefore, the internal error term $\bar\alpha\,\mathcal{I}$ vanishes identically, and the a-posteriori error representation is justified for stationary conservation laws with shocks.

\section{Adjoint Error Control in the Discontinuous Case}
\label{sec:adjoint_shock}
In this section, we give an alternative derivation of Giles' and Pierce's  \cite{GP97, GP00} adjoint error representation \eqref{fehler_darstellung_mit_interner_rb} for a non-smooth solution $w$ and a non-smooth function $v$ approximating $w$ in a certain sense we make more precise below. Giles and Pierce use a very short, formal calculus. But the differentiation of nonsmooth solutions, whose discontinuities are in different locations, is rather subtle. Here we confirm their calculation by a more detailed argument: we introduce a one-parameter family of coordinates, which links the smooth regions of both solutions. This helps us to formulate the distance of two such solutions, and to differentiate with respect to the new grid parameter.

We consider the domain $\Omega = [0,1]$. For suitable boundary conditions, it is well-known that solutions to nonlinear conservation laws exhibit jump discontinuities. The location of such a discontinuity (the \emph{shock location}) is denoted by $x = \alpha$.
We assume that $w$ is discontinuous in $x=\alpha$, while it is sufficiently smooth away from $\alpha$, which in particular means that $\lim_{\varepsilon \rightarrow 0^+} w(\alpha \pm \varepsilon) =: w^{\pm}$ exists. This is a standard setting and in no way a restriction. 
Now assume $w$ is perturbed in such a way that the resulting function $v := w + \overline w$ has one (and only one) discontinuity at $x = \beta =: \alpha + \overline \alpha$, and is also smooth away from $\beta$. 
In the case of a non-smooth function $w$, being \emph{approximated} by some other non-smooth function $v$, we cannot simply say that $\|v-w\|_{\infty}$ is small, say $O(\nu)$ for some small parameter $\nu$, because if $\alpha$ and $\beta$ do not coincide, we always have  an $O(1)$ approximation error in the $\infty-$norm in the region between $\alpha$ and $\beta$. 

As a consequence, in the following definition, we state what we mean by a \emph{sufficiently small} perturbation $\overline w$:
\begin{definition}[\emph{Sufficiently close}\label{defsufficientlysmooth} approximation of a discontinuous function] \label{sufficientlyclose} 
We say that $w$ is approximated by $v$ to order $\nu$ if 
\begin{itemize} 
  \item There exist $\alpha, \beta \in \R, 0 < \alpha < 1, 0 < \beta < 1$ and smooth, invertible functions 
  \begin{align}
    \label{def:xi1}
    \xi_1 &: [0,\alpha] \rightarrow [0,\beta], \\
    \label{def:xi2}
    \xi_2 &: [\alpha,1] \rightarrow [\beta,1]
  \end{align} 
  such that we have 
  \begin{alignat}{2}
      \label{def:sufficiently_smooth}
      w(x) &= v(\xi_1(x))^- + O(\nu), &\quad& x < \alpha \\
      w(x) &= v(\xi_2(x))^+ + O(\nu), &\quad& x > \alpha, 
  \end{alignat} 
  with the $O(\nu)-$bound assumed to be uniform.
  \item The $\xi_i$ have to fulfill the properties
	\begin{align} 
	    \label{approx_xistrich1}
	    \frac{d}{dx}\xi_1 &=1 + O(\nu), \\ 
	    \label{approx_xistrich2}
	    \frac{d}{dx}\xi_2 &= 1+O(\nu),  \\
	    \label{approx_xistrich3}
	    \xi_1(\alpha) = \xi_2(\alpha) &= \beta
        \end{align}
 and the second derivatives of $\xi_i$ are bounded. 
  \item The residual $r(v)$ is sufficiently small, meaning that we have the property 
	\begin{align}
	  r(v) := f(v)_x + S(v) &= O(\mu)
	\end{align}
  pointwise except at the discontinuity of $v$, where $\mu$ is another parameter going to zero. Usually, $\mu$ tends much slower to zero than $\nu$ does. 
\end{itemize}
\end{definition}

\begin{assumption}
The results we obtain in this section are independent of the relative position of $\alpha$ and $\beta$. However, for the sake of simplicity, we assume without loss of generality that $\alpha < \beta$.  
\end{assumption}

A visualization of the relevant quantities can be seen in figure \ref{fig:suff_close}. Let us make the remark that functions $w$ and $v$ according to definition \ref{sufficientlyclose} fulfill $\| w(\cdot) - v(\xi(\cdot)) \|_{\infty} = O(\nu)$. 
\begin{figure}[\width=0.45\textwidth]
\setlength{\unitlength}{0.240900pt}
\ifx\plotpoint\undefined\newsavebox{\plotpoint}\fi
\begin{picture}(1650,540)(0,0)
\sbox{\plotpoint}{\rule[-0.200pt]{0.400pt}{0.400pt}}%
\put(131.0,169.0){\rule[-0.200pt]{4.818pt}{0.400pt}}
\put(111,169){\makebox(0,0)[r]{-1}}
\put(1568.0,169.0){\rule[-0.200pt]{4.818pt}{0.400pt}}
\put(131.0,263.0){\rule[-0.200pt]{4.818pt}{0.400pt}}
\put(111,263){\makebox(0,0)[r]{ 0}}
\put(1568.0,263.0){\rule[-0.200pt]{4.818pt}{0.400pt}}
\put(131.0,357.0){\rule[-0.200pt]{4.818pt}{0.400pt}}
\put(111,357){\makebox(0,0)[r]{ 1}}
\put(1568.0,357.0){\rule[-0.200pt]{4.818pt}{0.400pt}}
\put(131.0,452.0){\rule[-0.200pt]{4.818pt}{0.400pt}}
\put(111,452){\makebox(0,0)[r]{ 2}}
\put(1568.0,452.0){\rule[-0.200pt]{4.818pt}{0.400pt}}
\put(131.0,131.0){\rule[-0.200pt]{0.400pt}{4.818pt}}
\put(131,90){\makebox(0,0){0}}
\put(131.0,479.0){\rule[-0.200pt]{0.400pt}{4.818pt}}
\put(495.0,131.0){\rule[-0.200pt]{0.400pt}{4.818pt}}
\put(495,90){\makebox(0,0){0.25}}
\put(495.0,479.0){\rule[-0.200pt]{0.400pt}{4.818pt}}
\put(860.0,131.0){\rule[-0.200pt]{0.400pt}{4.818pt}}
\put(860,90){\makebox(0,0){$\alpha$}}
\put(860.0,479.0){\rule[-0.200pt]{0.400pt}{4.818pt}}
\put(932.0,131.0){\rule[-0.200pt]{0.400pt}{4.818pt}}
\put(932,90){\makebox(0,0){$\beta$}}
\put(932.0,479.0){\rule[-0.200pt]{0.400pt}{4.818pt}}
\put(1224.0,131.0){\rule[-0.200pt]{0.400pt}{4.818pt}}
\put(1224,90){\makebox(0,0){0.75}}
\put(1224.0,479.0){\rule[-0.200pt]{0.400pt}{4.818pt}}
\put(1588.0,131.0){\rule[-0.200pt]{0.400pt}{4.818pt}}
\put(1588.0,479.0){\rule[-0.200pt]{0.400pt}{4.818pt}}
\put(131.0,131.0){\rule[-0.200pt]{0.400pt}{88.651pt}}
\put(131.0,131.0){\rule[-0.200pt]{350.991pt}{0.400pt}}
\put(1588.0,131.0){\rule[-0.200pt]{0.400pt}{88.651pt}}
\put(131.0,499.0){\rule[-0.200pt]{350.991pt}{0.400pt}}
\put(30,315){\makebox(0,0){$y$}}
\put(859,29){\makebox(0,0){$x$}}
\put(1428,459){\makebox(0,0)[r]{$w(x)$}}
\put(1448.0,459.0){\rule[-0.200pt]{12.286pt}{0.400pt}}
\put(131,169){\usebox{\plotpoint}}
\multiput(858.61,169.00)(0.447,41.765){3}{\rule{0.108pt}{25.167pt}}
\multiput(857.17,169.00)(3.000,135.765){2}{\rule{0.400pt}{12.583pt}}
\put(131.0,169.0){\rule[-0.200pt]{175.134pt}{0.400pt}}
\put(861.0,357.0){\rule[-0.200pt]{175.134pt}{0.400pt}}
\sbox{\plotpoint}{\rule[-0.500pt]{1.000pt}{1.000pt}}%
\sbox{\plotpoint}{\rule[-0.200pt]{0.400pt}{0.400pt}}%
\put(1428,418){\makebox(0,0)[r]{$v(x)$}}
\sbox{\plotpoint}{\rule[-0.500pt]{1.000pt}{1.000pt}}%
\multiput(1448,418)(20.756,0.000){3}{\usebox{\plotpoint}}
\put(1499,418){\usebox{\plotpoint}}
\put(131,169){\usebox{\plotpoint}}
\put(131.00,169.00){\usebox{\plotpoint}}
\put(151.76,169.00){\usebox{\plotpoint}}
\put(172.35,170.00){\usebox{\plotpoint}}
\put(193.10,170.00){\usebox{\plotpoint}}
\put(213.70,171.00){\usebox{\plotpoint}}
\put(234.45,171.00){\usebox{\plotpoint}}
\put(255.05,172.00){\usebox{\plotpoint}}
\put(275.80,172.00){\usebox{\plotpoint}}
\put(296.39,173.00){\usebox{\plotpoint}}
\put(317.15,173.00){\usebox{\plotpoint}}
\put(337.91,173.00){\usebox{\plotpoint}}
\put(358.50,174.00){\usebox{\plotpoint}}
\put(379.25,174.00){\usebox{\plotpoint}}
\put(399.86,174.95){\usebox{\plotpoint}}
\put(420.60,175.00){\usebox{\plotpoint}}
\put(441.36,175.00){\usebox{\plotpoint}}
\put(461.95,176.00){\usebox{\plotpoint}}
\put(482.71,176.00){\usebox{\plotpoint}}
\put(503.46,176.00){\usebox{\plotpoint}}
\put(524.10,176.70){\usebox{\plotpoint}}
\put(544.81,177.00){\usebox{\plotpoint}}
\put(565.57,177.00){\usebox{\plotpoint}}
\put(586.32,177.00){\usebox{\plotpoint}}
\put(606.92,178.00){\usebox{\plotpoint}}
\put(627.67,178.00){\usebox{\plotpoint}}
\put(648.43,178.00){\usebox{\plotpoint}}
\put(669.18,178.00){\usebox{\plotpoint}}
\put(689.78,179.00){\usebox{\plotpoint}}
\put(710.53,179.00){\usebox{\plotpoint}}
\put(731.29,179.00){\usebox{\plotpoint}}
\put(752.04,179.00){\usebox{\plotpoint}}
\put(772.80,179.00){\usebox{\plotpoint}}
\put(793.39,180.00){\usebox{\plotpoint}}
\put(814.15,180.00){\usebox{\plotpoint}}
\put(834.90,180.00){\usebox{\plotpoint}}
\put(855.66,180.00){\usebox{\plotpoint}}
\put(876.41,180.00){\usebox{\plotpoint}}
\put(897.17,180.00){\usebox{\plotpoint}}
\put(917.76,181.00){\usebox{\plotpoint}}
\multiput(931,181)(0.360,20.752){8}{\usebox{\plotpoint}}
\put(934.53,354.00){\usebox{\plotpoint}}
\put(955.29,354.00){\usebox{\plotpoint}}
\put(975.88,353.00){\usebox{\plotpoint}}
\put(996.40,352.00){\usebox{\plotpoint}}
\put(1017.10,351.63){\usebox{\plotpoint}}
\put(1037.75,351.00){\usebox{\plotpoint}}
\put(1058.34,350.00){\usebox{\plotpoint}}
\put(1079.10,350.00){\usebox{\plotpoint}}
\put(1099.69,349.00){\usebox{\plotpoint}}
\put(1120.45,349.00){\usebox{\plotpoint}}
\put(1141.20,349.00){\usebox{\plotpoint}}
\put(1161.79,348.00){\usebox{\plotpoint}}
\put(1182.55,348.00){\usebox{\plotpoint}}
\put(1203.31,348.00){\usebox{\plotpoint}}
\put(1224.06,348.00){\usebox{\plotpoint}}
\put(1244.82,348.00){\usebox{\plotpoint}}
\put(1265.57,348.00){\usebox{\plotpoint}}
\put(1286.33,348.00){\usebox{\plotpoint}}
\put(1306.92,349.00){\usebox{\plotpoint}}
\put(1327.68,349.00){\usebox{\plotpoint}}
\put(1348.43,349.00){\usebox{\plotpoint}}
\put(1369.03,350.00){\usebox{\plotpoint}}
\put(1389.78,350.00){\usebox{\plotpoint}}
\put(1410.37,351.00){\usebox{\plotpoint}}
\put(1430.97,352.00){\usebox{\plotpoint}}
\put(1451.72,352.00){\usebox{\plotpoint}}
\put(1472.24,353.00){\usebox{\plotpoint}}
\put(1492.84,354.00){\usebox{\plotpoint}}
\put(1513.51,354.50){\usebox{\plotpoint}}
\put(1534.18,355.00){\usebox{\plotpoint}}
\put(1554.78,356.00){\usebox{\plotpoint}}
\put(1575.37,357.00){\usebox{\plotpoint}}
\put(1588,357){\usebox{\plotpoint}}
\sbox{\plotpoint}{\rule[-0.200pt]{0.400pt}{0.400pt}}%
\put(1428,377){\makebox(0,0)[r]{$\overline w(x) = w(x) - v(x)$}}
\multiput(1448,377)(20.756,0.000){3}{\usebox{\plotpoint}}
\put(1499,377){\usebox{\plotpoint}}
\put(131,263){\usebox{\plotpoint}}
\put(131.00,263.00){\usebox{\plotpoint}}
\put(151.76,263.00){\usebox{\plotpoint}}
\put(172.35,262.00){\usebox{\plotpoint}}
\put(193.10,262.00){\usebox{\plotpoint}}
\put(213.70,261.00){\usebox{\plotpoint}}
\put(234.45,261.00){\usebox{\plotpoint}}
\put(255.05,260.00){\usebox{\plotpoint}}
\put(275.80,260.00){\usebox{\plotpoint}}
\put(296.39,259.00){\usebox{\plotpoint}}
\put(317.15,259.00){\usebox{\plotpoint}}
\put(337.76,258.08){\usebox{\plotpoint}}
\put(358.50,258.00){\usebox{\plotpoint}}
\put(379.25,258.00){\usebox{\plotpoint}}
\put(399.85,257.00){\usebox{\plotpoint}}
\put(420.60,257.00){\usebox{\plotpoint}}
\put(441.36,257.00){\usebox{\plotpoint}}
\put(461.95,256.00){\usebox{\plotpoint}}
\put(482.71,256.00){\usebox{\plotpoint}}
\put(503.46,256.00){\usebox{\plotpoint}}
\put(523.98,255.00){\usebox{\plotpoint}}
\put(544.74,255.00){\usebox{\plotpoint}}
\put(565.49,255.00){\usebox{\plotpoint}}
\put(586.01,254.00){\usebox{\plotpoint}}
\put(606.77,254.00){\usebox{\plotpoint}}
\put(627.52,254.00){\usebox{\plotpoint}}
\put(648.28,254.00){\usebox{\plotpoint}}
\put(668.87,253.00){\usebox{\plotpoint}}
\put(689.63,253.00){\usebox{\plotpoint}}
\put(710.38,253.00){\usebox{\plotpoint}}
\put(731.14,253.00){\usebox{\plotpoint}}
\put(751.89,253.00){\usebox{\plotpoint}}
\put(772.49,252.00){\usebox{\plotpoint}}
\put(793.24,252.00){\usebox{\plotpoint}}
\put(814.00,252.00){\usebox{\plotpoint}}
\put(834.75,252.00){\usebox{\plotpoint}}
\put(855.51,252.00){\usebox{\plotpoint}}
\multiput(858,252)(0.331,20.753){9}{\usebox{\plotpoint}}
\put(878.05,440.00){\usebox{\plotpoint}}
\put(898.80,440.00){\usebox{\plotpoint}}
\put(919.56,440.00){\usebox{\plotpoint}}
\multiput(931,440)(0.358,-20.752){8}{\usebox{\plotpoint}}
\put(935.33,266.00){\usebox{\plotpoint}}
\put(955.92,267.00){\usebox{\plotpoint}}
\put(976.51,268.00){\usebox{\plotpoint}}
\put(997.27,268.00){\usebox{\plotpoint}}
\put(1017.86,269.00){\usebox{\plotpoint}}
\put(1038.46,270.00){\usebox{\plotpoint}}
\put(1059.21,270.00){\usebox{\plotpoint}}
\put(1079.81,271.00){\usebox{\plotpoint}}
\put(1100.56,271.00){\usebox{\plotpoint}}
\put(1121.15,272.00){\usebox{\plotpoint}}
\put(1141.91,272.00){\usebox{\plotpoint}}
\put(1162.67,272.00){\usebox{\plotpoint}}
\put(1183.42,272.00){\usebox{\plotpoint}}
\put(1204.12,272.37){\usebox{\plotpoint}}
\put(1224.77,273.00){\usebox{\plotpoint}}
\put(1245.40,272.20){\usebox{\plotpoint}}
\put(1266.12,272.00){\usebox{\plotpoint}}
\put(1286.87,272.00){\usebox{\plotpoint}}
\put(1307.63,272.00){\usebox{\plotpoint}}
\put(1328.38,272.00){\usebox{\plotpoint}}
\put(1348.98,271.00){\usebox{\plotpoint}}
\put(1369.73,271.00){\usebox{\plotpoint}}
\put(1390.33,270.00){\usebox{\plotpoint}}
\put(1411.08,270.00){\usebox{\plotpoint}}
\put(1431.68,269.00){\usebox{\plotpoint}}
\put(1452.27,268.00){\usebox{\plotpoint}}
\put(1473.02,268.00){\usebox{\plotpoint}}
\put(1493.62,267.00){\usebox{\plotpoint}}
\put(1514.21,266.00){\usebox{\plotpoint}}
\put(1534.80,265.00){\usebox{\plotpoint}}
\put(1555.43,264.19){\usebox{\plotpoint}}
\put(1576.14,263.95){\usebox{\plotpoint}}
\put(1588,263){\usebox{\plotpoint}}
\put(131.0,131.0){\rule[-0.200pt]{0.400pt}{88.651pt}}
\put(131.0,131.0){\rule[-0.200pt]{350.991pt}{0.400pt}}
\put(1588.0,131.0){\rule[-0.200pt]{0.400pt}{88.651pt}}
\put(131.0,499.0){\rule[-0.200pt]{350.991pt}{0.400pt}}
\end{picture}
\begin{picture}(1650,540)(0,0)
\put(131.0,169.0){\rule[-0.200pt]{4.818pt}{0.400pt}}
\put(111,169){\makebox(0,0)[r]{-1}}
\put(1568.0,169.0){\rule[-0.200pt]{4.818pt}{0.400pt}}
\put(131.0,263.0){\rule[-0.200pt]{4.818pt}{0.400pt}}
\put(111,263){\makebox(0,0)[r]{ 0}}
\put(1568.0,263.0){\rule[-0.200pt]{4.818pt}{0.400pt}}
\put(131.0,312.0){\rule[-0.200pt]{4.818pt}{0.400pt}}
\put(111,312){\makebox(0,0)[r]{$\beta$}}
\put(1568.0,312.0){\rule[-0.200pt]{4.818pt}{0.400pt}}
\put(131.0,357.0){\rule[-0.200pt]{4.818pt}{0.400pt}}
\put(111,357){\makebox(0,0)[r]{ 1}}
\put(1568.0,357.0){\rule[-0.200pt]{4.818pt}{0.400pt}}
\put(131.0,452.0){\rule[-0.200pt]{4.818pt}{0.400pt}}
\put(111,452){\makebox(0,0)[r]{ 2}}
\put(1568.0,452.0){\rule[-0.200pt]{4.818pt}{0.400pt}}
\put(131.0,131.0){\rule[-0.200pt]{0.400pt}{4.818pt}}
\put(131,90){\makebox(0,0){0}}
\put(131.0,479.0){\rule[-0.200pt]{0.400pt}{4.818pt}}
\put(495.0,131.0){\rule[-0.200pt]{0.400pt}{4.818pt}}
\put(495,90){\makebox(0,0){0.25}}
\put(495.0,479.0){\rule[-0.200pt]{0.400pt}{4.818pt}}
\put(860.0,131.0){\rule[-0.200pt]{0.400pt}{4.818pt}}
\put(860,90){\makebox(0,0){$\alpha$}}
\put(860.0,479.0){\rule[-0.200pt]{0.400pt}{4.818pt}}
\put(1224.0,131.0){\rule[-0.200pt]{0.400pt}{4.818pt}}
\put(1224,90){\makebox(0,0){0.75}}
\put(1224.0,479.0){\rule[-0.200pt]{0.400pt}{4.818pt}}
\put(1588.0,131.0){\rule[-0.200pt]{0.400pt}{4.818pt}}
\put(1588.0,479.0){\rule[-0.200pt]{0.400pt}{4.818pt}}
\put(131.0,131.0){\rule[-0.200pt]{0.400pt}{88.651pt}}
\put(131.0,131.0){\rule[-0.200pt]{350.991pt}{0.400pt}}
\put(1588.0,131.0){\rule[-0.200pt]{0.400pt}{88.651pt}}
\put(131.0,499.0){\rule[-0.200pt]{350.991pt}{0.400pt}}
\put(30,315){\makebox(0,0){$y$}}
\put(859,29){\makebox(0,0){$x$}}
\put(1428,459){\makebox(0,0)[r]{$\xi(x)$}}
\put(1448.0,459.0){\rule[-0.200pt]{12.286pt}{0.400pt}}
\put(131,263){\usebox{\plotpoint}}
\put(134,262.67){\rule{0.723pt}{0.400pt}}
\multiput(134.00,262.17)(1.500,1.000){2}{\rule{0.361pt}{0.400pt}}
\put(131.0,263.0){\rule[-0.200pt]{0.723pt}{0.400pt}}
\put(149,263.67){\rule{0.482pt}{0.400pt}}
\multiput(149.00,263.17)(1.000,1.000){2}{\rule{0.241pt}{0.400pt}}
\put(137.0,264.0){\rule[-0.200pt]{2.891pt}{0.400pt}}
\put(160,264.67){\rule{0.723pt}{0.400pt}}
\multiput(160.00,264.17)(1.500,1.000){2}{\rule{0.361pt}{0.400pt}}
\put(151.0,265.0){\rule[-0.200pt]{2.168pt}{0.400pt}}
\put(175,265.67){\rule{0.723pt}{0.400pt}}
\multiput(175.00,265.17)(1.500,1.000){2}{\rule{0.361pt}{0.400pt}}
\put(163.0,266.0){\rule[-0.200pt]{2.891pt}{0.400pt}}
\put(186,266.67){\rule{0.723pt}{0.400pt}}
\multiput(186.00,266.17)(1.500,1.000){2}{\rule{0.361pt}{0.400pt}}
\put(178.0,267.0){\rule[-0.200pt]{1.927pt}{0.400pt}}
\put(198,267.67){\rule{0.723pt}{0.400pt}}
\multiput(198.00,267.17)(1.500,1.000){2}{\rule{0.361pt}{0.400pt}}
\put(189.0,268.0){\rule[-0.200pt]{2.168pt}{0.400pt}}
\put(213,268.67){\rule{0.723pt}{0.400pt}}
\multiput(213.00,268.17)(1.500,1.000){2}{\rule{0.361pt}{0.400pt}}
\put(201.0,269.0){\rule[-0.200pt]{2.891pt}{0.400pt}}
\put(224,269.67){\rule{0.723pt}{0.400pt}}
\multiput(224.00,269.17)(1.500,1.000){2}{\rule{0.361pt}{0.400pt}}
\put(216.0,270.0){\rule[-0.200pt]{1.927pt}{0.400pt}}
\put(239,270.67){\rule{0.723pt}{0.400pt}}
\multiput(239.00,270.17)(1.500,1.000){2}{\rule{0.361pt}{0.400pt}}
\put(227.0,271.0){\rule[-0.200pt]{2.891pt}{0.400pt}}
\put(254,271.67){\rule{0.723pt}{0.400pt}}
\multiput(254.00,271.17)(1.500,1.000){2}{\rule{0.361pt}{0.400pt}}
\put(242.0,272.0){\rule[-0.200pt]{2.891pt}{0.400pt}}
\put(265,272.67){\rule{0.723pt}{0.400pt}}
\multiput(265.00,272.17)(1.500,1.000){2}{\rule{0.361pt}{0.400pt}}
\put(257.0,273.0){\rule[-0.200pt]{1.927pt}{0.400pt}}
\put(280,273.67){\rule{0.723pt}{0.400pt}}
\multiput(280.00,273.17)(1.500,1.000){2}{\rule{0.361pt}{0.400pt}}
\put(268.0,274.0){\rule[-0.200pt]{2.891pt}{0.400pt}}
\put(292,274.67){\rule{0.723pt}{0.400pt}}
\multiput(292.00,274.17)(1.500,1.000){2}{\rule{0.361pt}{0.400pt}}
\put(283.0,275.0){\rule[-0.200pt]{2.168pt}{0.400pt}}
\put(306,275.67){\rule{0.723pt}{0.400pt}}
\multiput(306.00,275.17)(1.500,1.000){2}{\rule{0.361pt}{0.400pt}}
\put(295.0,276.0){\rule[-0.200pt]{2.650pt}{0.400pt}}
\put(318,276.67){\rule{0.723pt}{0.400pt}}
\multiput(318.00,276.17)(1.500,1.000){2}{\rule{0.361pt}{0.400pt}}
\put(309.0,277.0){\rule[-0.200pt]{2.168pt}{0.400pt}}
\put(332,277.67){\rule{0.723pt}{0.400pt}}
\multiput(332.00,277.17)(1.500,1.000){2}{\rule{0.361pt}{0.400pt}}
\put(321.0,278.0){\rule[-0.200pt]{2.650pt}{0.400pt}}
\put(347,278.67){\rule{0.723pt}{0.400pt}}
\multiput(347.00,278.17)(1.500,1.000){2}{\rule{0.361pt}{0.400pt}}
\put(335.0,279.0){\rule[-0.200pt]{2.891pt}{0.400pt}}
\put(359,279.67){\rule{0.723pt}{0.400pt}}
\multiput(359.00,279.17)(1.500,1.000){2}{\rule{0.361pt}{0.400pt}}
\put(350.0,280.0){\rule[-0.200pt]{2.168pt}{0.400pt}}
\put(373,280.67){\rule{0.723pt}{0.400pt}}
\multiput(373.00,280.17)(1.500,1.000){2}{\rule{0.361pt}{0.400pt}}
\put(362.0,281.0){\rule[-0.200pt]{2.650pt}{0.400pt}}
\put(388,281.67){\rule{0.723pt}{0.400pt}}
\multiput(388.00,281.17)(1.500,1.000){2}{\rule{0.361pt}{0.400pt}}
\put(376.0,282.0){\rule[-0.200pt]{2.891pt}{0.400pt}}
\put(400,282.67){\rule{0.723pt}{0.400pt}}
\multiput(400.00,282.17)(1.500,1.000){2}{\rule{0.361pt}{0.400pt}}
\put(391.0,283.0){\rule[-0.200pt]{2.168pt}{0.400pt}}
\put(414,283.67){\rule{0.723pt}{0.400pt}}
\multiput(414.00,283.17)(1.500,1.000){2}{\rule{0.361pt}{0.400pt}}
\put(403.0,284.0){\rule[-0.200pt]{2.650pt}{0.400pt}}
\put(429,284.67){\rule{0.723pt}{0.400pt}}
\multiput(429.00,284.17)(1.500,1.000){2}{\rule{0.361pt}{0.400pt}}
\put(417.0,285.0){\rule[-0.200pt]{2.891pt}{0.400pt}}
\put(441,285.67){\rule{0.482pt}{0.400pt}}
\multiput(441.00,285.17)(1.000,1.000){2}{\rule{0.241pt}{0.400pt}}
\put(432.0,286.0){\rule[-0.200pt]{2.168pt}{0.400pt}}
\put(455,286.67){\rule{0.723pt}{0.400pt}}
\multiput(455.00,286.17)(1.500,1.000){2}{\rule{0.361pt}{0.400pt}}
\put(443.0,287.0){\rule[-0.200pt]{2.891pt}{0.400pt}}
\put(470,287.67){\rule{0.723pt}{0.400pt}}
\multiput(470.00,287.17)(1.500,1.000){2}{\rule{0.361pt}{0.400pt}}
\put(458.0,288.0){\rule[-0.200pt]{2.891pt}{0.400pt}}
\put(484,288.67){\rule{0.723pt}{0.400pt}}
\multiput(484.00,288.17)(1.500,1.000){2}{\rule{0.361pt}{0.400pt}}
\put(473.0,289.0){\rule[-0.200pt]{2.650pt}{0.400pt}}
\put(499,289.67){\rule{0.723pt}{0.400pt}}
\multiput(499.00,289.17)(1.500,1.000){2}{\rule{0.361pt}{0.400pt}}
\put(487.0,290.0){\rule[-0.200pt]{2.891pt}{0.400pt}}
\put(511,290.67){\rule{0.482pt}{0.400pt}}
\multiput(511.00,290.17)(1.000,1.000){2}{\rule{0.241pt}{0.400pt}}
\put(502.0,291.0){\rule[-0.200pt]{2.168pt}{0.400pt}}
\put(525,291.67){\rule{0.723pt}{0.400pt}}
\multiput(525.00,291.17)(1.500,1.000){2}{\rule{0.361pt}{0.400pt}}
\put(513.0,292.0){\rule[-0.200pt]{2.891pt}{0.400pt}}
\put(540,292.67){\rule{0.723pt}{0.400pt}}
\multiput(540.00,292.17)(1.500,1.000){2}{\rule{0.361pt}{0.400pt}}
\put(528.0,293.0){\rule[-0.200pt]{2.891pt}{0.400pt}}
\put(554,293.67){\rule{0.723pt}{0.400pt}}
\multiput(554.00,293.17)(1.500,1.000){2}{\rule{0.361pt}{0.400pt}}
\put(543.0,294.0){\rule[-0.200pt]{2.650pt}{0.400pt}}
\put(569,294.67){\rule{0.723pt}{0.400pt}}
\multiput(569.00,294.17)(1.500,1.000){2}{\rule{0.361pt}{0.400pt}}
\put(557.0,295.0){\rule[-0.200pt]{2.891pt}{0.400pt}}
\put(584,295.67){\rule{0.482pt}{0.400pt}}
\multiput(584.00,295.17)(1.000,1.000){2}{\rule{0.241pt}{0.400pt}}
\put(572.0,296.0){\rule[-0.200pt]{2.891pt}{0.400pt}}
\put(598,296.67){\rule{0.723pt}{0.400pt}}
\multiput(598.00,296.17)(1.500,1.000){2}{\rule{0.361pt}{0.400pt}}
\put(586.0,297.0){\rule[-0.200pt]{2.891pt}{0.400pt}}
\put(613,297.67){\rule{0.723pt}{0.400pt}}
\multiput(613.00,297.17)(1.500,1.000){2}{\rule{0.361pt}{0.400pt}}
\put(601.0,298.0){\rule[-0.200pt]{2.891pt}{0.400pt}}
\put(627,298.67){\rule{0.723pt}{0.400pt}}
\multiput(627.00,298.17)(1.500,1.000){2}{\rule{0.361pt}{0.400pt}}
\put(616.0,299.0){\rule[-0.200pt]{2.650pt}{0.400pt}}
\put(642,299.67){\rule{0.723pt}{0.400pt}}
\multiput(642.00,299.17)(1.500,1.000){2}{\rule{0.361pt}{0.400pt}}
\put(630.0,300.0){\rule[-0.200pt]{2.891pt}{0.400pt}}
\put(657,300.67){\rule{0.482pt}{0.400pt}}
\multiput(657.00,300.17)(1.000,1.000){2}{\rule{0.241pt}{0.400pt}}
\put(645.0,301.0){\rule[-0.200pt]{2.891pt}{0.400pt}}
\put(671,301.67){\rule{0.723pt}{0.400pt}}
\multiput(671.00,301.17)(1.500,1.000){2}{\rule{0.361pt}{0.400pt}}
\put(659.0,302.0){\rule[-0.200pt]{2.891pt}{0.400pt}}
\put(686,302.67){\rule{0.723pt}{0.400pt}}
\multiput(686.00,302.17)(1.500,1.000){2}{\rule{0.361pt}{0.400pt}}
\put(674.0,303.0){\rule[-0.200pt]{2.891pt}{0.400pt}}
\put(700,303.67){\rule{0.723pt}{0.400pt}}
\multiput(700.00,303.17)(1.500,1.000){2}{\rule{0.361pt}{0.400pt}}
\put(689.0,304.0){\rule[-0.200pt]{2.650pt}{0.400pt}}
\put(715,304.67){\rule{0.723pt}{0.400pt}}
\multiput(715.00,304.17)(1.500,1.000){2}{\rule{0.361pt}{0.400pt}}
\put(703.0,305.0){\rule[-0.200pt]{2.891pt}{0.400pt}}
\put(730,305.67){\rule{0.482pt}{0.400pt}}
\multiput(730.00,305.17)(1.000,1.000){2}{\rule{0.241pt}{0.400pt}}
\put(718.0,306.0){\rule[-0.200pt]{2.891pt}{0.400pt}}
\put(744,306.67){\rule{0.723pt}{0.400pt}}
\multiput(744.00,306.17)(1.500,1.000){2}{\rule{0.361pt}{0.400pt}}
\put(732.0,307.0){\rule[-0.200pt]{2.891pt}{0.400pt}}
\put(759,307.67){\rule{0.723pt}{0.400pt}}
\multiput(759.00,307.17)(1.500,1.000){2}{\rule{0.361pt}{0.400pt}}
\put(747.0,308.0){\rule[-0.200pt]{2.891pt}{0.400pt}}
\put(773,308.67){\rule{0.723pt}{0.400pt}}
\multiput(773.00,308.17)(1.500,1.000){2}{\rule{0.361pt}{0.400pt}}
\put(762.0,309.0){\rule[-0.200pt]{2.650pt}{0.400pt}}
\put(788,309.67){\rule{0.723pt}{0.400pt}}
\multiput(788.00,309.17)(1.500,1.000){2}{\rule{0.361pt}{0.400pt}}
\put(776.0,310.0){\rule[-0.200pt]{2.891pt}{0.400pt}}
\put(805,310.67){\rule{0.723pt}{0.400pt}}
\multiput(805.00,310.17)(1.500,1.000){2}{\rule{0.361pt}{0.400pt}}
\put(791.0,311.0){\rule[-0.200pt]{3.373pt}{0.400pt}}
\put(820,311.67){\rule{0.723pt}{0.400pt}}
\multiput(820.00,311.17)(1.500,1.000){2}{\rule{0.361pt}{0.400pt}}
\put(808.0,312.0){\rule[-0.200pt]{2.891pt}{0.400pt}}
\put(835,312.67){\rule{0.723pt}{0.400pt}}
\multiput(835.00,312.17)(1.500,1.000){2}{\rule{0.361pt}{0.400pt}}
\put(823.0,313.0){\rule[-0.200pt]{2.891pt}{0.400pt}}
\put(849,313.67){\rule{0.723pt}{0.400pt}}
\multiput(849.00,313.17)(1.500,1.000){2}{\rule{0.361pt}{0.400pt}}
\put(838.0,314.0){\rule[-0.200pt]{2.650pt}{0.400pt}}
\put(867,314.67){\rule{0.723pt}{0.400pt}}
\multiput(867.00,314.17)(1.500,1.000){2}{\rule{0.361pt}{0.400pt}}
\put(852.0,315.0){\rule[-0.200pt]{3.613pt}{0.400pt}}
\put(881,315.67){\rule{0.723pt}{0.400pt}}
\multiput(881.00,315.17)(1.500,1.000){2}{\rule{0.361pt}{0.400pt}}
\put(870.0,316.0){\rule[-0.200pt]{2.650pt}{0.400pt}}
\put(896,316.67){\rule{0.723pt}{0.400pt}}
\multiput(896.00,316.17)(1.500,1.000){2}{\rule{0.361pt}{0.400pt}}
\put(884.0,317.0){\rule[-0.200pt]{2.891pt}{0.400pt}}
\put(914,317.67){\rule{0.482pt}{0.400pt}}
\multiput(914.00,317.17)(1.000,1.000){2}{\rule{0.241pt}{0.400pt}}
\put(899.0,318.0){\rule[-0.200pt]{3.613pt}{0.400pt}}
\put(928,318.67){\rule{0.723pt}{0.400pt}}
\multiput(928.00,318.17)(1.500,1.000){2}{\rule{0.361pt}{0.400pt}}
\put(916.0,319.0){\rule[-0.200pt]{2.891pt}{0.400pt}}
\put(943,319.67){\rule{0.723pt}{0.400pt}}
\multiput(943.00,319.17)(1.500,1.000){2}{\rule{0.361pt}{0.400pt}}
\put(931.0,320.0){\rule[-0.200pt]{2.891pt}{0.400pt}}
\put(960,320.67){\rule{0.723pt}{0.400pt}}
\multiput(960.00,320.17)(1.500,1.000){2}{\rule{0.361pt}{0.400pt}}
\put(946.0,321.0){\rule[-0.200pt]{3.373pt}{0.400pt}}
\put(975,321.67){\rule{0.723pt}{0.400pt}}
\multiput(975.00,321.17)(1.500,1.000){2}{\rule{0.361pt}{0.400pt}}
\put(963.0,322.0){\rule[-0.200pt]{2.891pt}{0.400pt}}
\put(992,322.67){\rule{0.723pt}{0.400pt}}
\multiput(992.00,322.17)(1.500,1.000){2}{\rule{0.361pt}{0.400pt}}
\put(978.0,323.0){\rule[-0.200pt]{3.373pt}{0.400pt}}
\put(1007,323.67){\rule{0.723pt}{0.400pt}}
\multiput(1007.00,323.17)(1.500,1.000){2}{\rule{0.361pt}{0.400pt}}
\put(995.0,324.0){\rule[-0.200pt]{2.891pt}{0.400pt}}
\put(1024,324.67){\rule{0.723pt}{0.400pt}}
\multiput(1024.00,324.17)(1.500,1.000){2}{\rule{0.361pt}{0.400pt}}
\put(1010.0,325.0){\rule[-0.200pt]{3.373pt}{0.400pt}}
\put(1039,325.67){\rule{0.723pt}{0.400pt}}
\multiput(1039.00,325.17)(1.500,1.000){2}{\rule{0.361pt}{0.400pt}}
\put(1027.0,326.0){\rule[-0.200pt]{2.891pt}{0.400pt}}
\put(1057,326.67){\rule{0.723pt}{0.400pt}}
\multiput(1057.00,326.17)(1.500,1.000){2}{\rule{0.361pt}{0.400pt}}
\put(1042.0,327.0){\rule[-0.200pt]{3.613pt}{0.400pt}}
\put(1074,327.67){\rule{0.723pt}{0.400pt}}
\multiput(1074.00,327.17)(1.500,1.000){2}{\rule{0.361pt}{0.400pt}}
\put(1060.0,328.0){\rule[-0.200pt]{3.373pt}{0.400pt}}
\put(1089,328.67){\rule{0.723pt}{0.400pt}}
\multiput(1089.00,328.17)(1.500,1.000){2}{\rule{0.361pt}{0.400pt}}
\put(1077.0,329.0){\rule[-0.200pt]{2.891pt}{0.400pt}}
\put(1106,329.67){\rule{0.723pt}{0.400pt}}
\multiput(1106.00,329.17)(1.500,1.000){2}{\rule{0.361pt}{0.400pt}}
\put(1092.0,330.0){\rule[-0.200pt]{3.373pt}{0.400pt}}
\put(1124,330.67){\rule{0.723pt}{0.400pt}}
\multiput(1124.00,330.17)(1.500,1.000){2}{\rule{0.361pt}{0.400pt}}
\put(1109.0,331.0){\rule[-0.200pt]{3.613pt}{0.400pt}}
\put(1138,331.67){\rule{0.723pt}{0.400pt}}
\multiput(1138.00,331.17)(1.500,1.000){2}{\rule{0.361pt}{0.400pt}}
\put(1127.0,332.0){\rule[-0.200pt]{2.650pt}{0.400pt}}
\put(1156,332.67){\rule{0.723pt}{0.400pt}}
\multiput(1156.00,332.17)(1.500,1.000){2}{\rule{0.361pt}{0.400pt}}
\put(1141.0,333.0){\rule[-0.200pt]{3.613pt}{0.400pt}}
\put(1173,333.67){\rule{0.723pt}{0.400pt}}
\multiput(1173.00,333.17)(1.500,1.000){2}{\rule{0.361pt}{0.400pt}}
\put(1159.0,334.0){\rule[-0.200pt]{3.373pt}{0.400pt}}
\put(1191,334.67){\rule{0.723pt}{0.400pt}}
\multiput(1191.00,334.17)(1.500,1.000){2}{\rule{0.361pt}{0.400pt}}
\put(1176.0,335.0){\rule[-0.200pt]{3.613pt}{0.400pt}}
\put(1206,335.67){\rule{0.482pt}{0.400pt}}
\multiput(1206.00,335.17)(1.000,1.000){2}{\rule{0.241pt}{0.400pt}}
\put(1194.0,336.0){\rule[-0.200pt]{2.891pt}{0.400pt}}
\put(1223,336.67){\rule{0.723pt}{0.400pt}}
\multiput(1223.00,336.17)(1.500,1.000){2}{\rule{0.361pt}{0.400pt}}
\put(1208.0,337.0){\rule[-0.200pt]{3.613pt}{0.400pt}}
\put(1241,337.67){\rule{0.482pt}{0.400pt}}
\multiput(1241.00,337.17)(1.000,1.000){2}{\rule{0.241pt}{0.400pt}}
\put(1226.0,338.0){\rule[-0.200pt]{3.613pt}{0.400pt}}
\put(1258,338.67){\rule{0.723pt}{0.400pt}}
\multiput(1258.00,338.17)(1.500,1.000){2}{\rule{0.361pt}{0.400pt}}
\put(1243.0,339.0){\rule[-0.200pt]{3.613pt}{0.400pt}}
\put(1276,339.67){\rule{0.482pt}{0.400pt}}
\multiput(1276.00,339.17)(1.000,1.000){2}{\rule{0.241pt}{0.400pt}}
\put(1261.0,340.0){\rule[-0.200pt]{3.613pt}{0.400pt}}
\put(1293,340.67){\rule{0.723pt}{0.400pt}}
\multiput(1293.00,340.17)(1.500,1.000){2}{\rule{0.361pt}{0.400pt}}
\put(1278.0,341.0){\rule[-0.200pt]{3.613pt}{0.400pt}}
\put(1311,341.67){\rule{0.723pt}{0.400pt}}
\multiput(1311.00,341.17)(1.500,1.000){2}{\rule{0.361pt}{0.400pt}}
\put(1296.0,342.0){\rule[-0.200pt]{3.613pt}{0.400pt}}
\put(1328,342.67){\rule{0.723pt}{0.400pt}}
\multiput(1328.00,342.17)(1.500,1.000){2}{\rule{0.361pt}{0.400pt}}
\put(1314.0,343.0){\rule[-0.200pt]{3.373pt}{0.400pt}}
\put(1346,343.67){\rule{0.723pt}{0.400pt}}
\multiput(1346.00,343.17)(1.500,1.000){2}{\rule{0.361pt}{0.400pt}}
\put(1331.0,344.0){\rule[-0.200pt]{3.613pt}{0.400pt}}
\put(1363,344.67){\rule{0.723pt}{0.400pt}}
\multiput(1363.00,344.17)(1.500,1.000){2}{\rule{0.361pt}{0.400pt}}
\put(1349.0,345.0){\rule[-0.200pt]{3.373pt}{0.400pt}}
\put(1381,345.67){\rule{0.723pt}{0.400pt}}
\multiput(1381.00,345.17)(1.500,1.000){2}{\rule{0.361pt}{0.400pt}}
\put(1366.0,346.0){\rule[-0.200pt]{3.613pt}{0.400pt}}
\put(1401,346.67){\rule{0.723pt}{0.400pt}}
\multiput(1401.00,346.17)(1.500,1.000){2}{\rule{0.361pt}{0.400pt}}
\put(1384.0,347.0){\rule[-0.200pt]{4.095pt}{0.400pt}}
\put(1419,347.67){\rule{0.723pt}{0.400pt}}
\multiput(1419.00,347.17)(1.500,1.000){2}{\rule{0.361pt}{0.400pt}}
\put(1404.0,348.0){\rule[-0.200pt]{3.613pt}{0.400pt}}
\put(1436,348.67){\rule{0.723pt}{0.400pt}}
\multiput(1436.00,348.17)(1.500,1.000){2}{\rule{0.361pt}{0.400pt}}
\put(1422.0,349.0){\rule[-0.200pt]{3.373pt}{0.400pt}}
\put(1454,349.67){\rule{0.723pt}{0.400pt}}
\multiput(1454.00,349.17)(1.500,1.000){2}{\rule{0.361pt}{0.400pt}}
\put(1439.0,350.0){\rule[-0.200pt]{3.613pt}{0.400pt}}
\put(1474,350.67){\rule{0.723pt}{0.400pt}}
\multiput(1474.00,350.17)(1.500,1.000){2}{\rule{0.361pt}{0.400pt}}
\put(1457.0,351.0){\rule[-0.200pt]{4.095pt}{0.400pt}}
\put(1492,351.67){\rule{0.723pt}{0.400pt}}
\multiput(1492.00,351.17)(1.500,1.000){2}{\rule{0.361pt}{0.400pt}}
\put(1477.0,352.0){\rule[-0.200pt]{3.613pt}{0.400pt}}
\put(1512,352.67){\rule{0.723pt}{0.400pt}}
\multiput(1512.00,352.17)(1.500,1.000){2}{\rule{0.361pt}{0.400pt}}
\put(1495.0,353.0){\rule[-0.200pt]{4.095pt}{0.400pt}}
\put(1530,353.67){\rule{0.723pt}{0.400pt}}
\multiput(1530.00,353.17)(1.500,1.000){2}{\rule{0.361pt}{0.400pt}}
\put(1515.0,354.0){\rule[-0.200pt]{3.613pt}{0.400pt}}
\put(1550,354.67){\rule{0.723pt}{0.400pt}}
\multiput(1550.00,354.17)(1.500,1.000){2}{\rule{0.361pt}{0.400pt}}
\put(1533.0,355.0){\rule[-0.200pt]{4.095pt}{0.400pt}}
\put(1568,355.67){\rule{0.482pt}{0.400pt}}
\multiput(1568.00,355.17)(1.000,1.000){2}{\rule{0.241pt}{0.400pt}}
\put(1553.0,356.0){\rule[-0.200pt]{3.613pt}{0.400pt}}
\put(1570.0,357.0){\rule[-0.200pt]{4.336pt}{0.400pt}}
\sbox{\plotpoint}{\rule[-0.400pt]{0.800pt}{0.800pt}}%
\sbox{\plotpoint}{\rule[-0.200pt]{0.400pt}{0.400pt}}%
\put(1428,418){\makebox(0,0)[r]{$\frac{d}{dx}\xi(x)$}}
\sbox{\plotpoint}{\rule[-0.400pt]{0.800pt}{0.800pt}}%
\put(1448.0,418.0){\rule[-0.400pt]{12.286pt}{0.800pt}}
\put(131,376){\usebox{\plotpoint}}
\put(163,373.84){\rule{0.723pt}{0.800pt}}
\multiput(163.00,374.34)(1.500,-1.000){2}{\rule{0.361pt}{0.800pt}}
\put(131.0,376.0){\rule[-0.400pt]{7.709pt}{0.800pt}}
\put(201,372.84){\rule{0.723pt}{0.800pt}}
\multiput(201.00,373.34)(1.500,-1.000){2}{\rule{0.361pt}{0.800pt}}
\put(166.0,375.0){\rule[-0.400pt]{8.431pt}{0.800pt}}
\put(239,371.84){\rule{0.723pt}{0.800pt}}
\multiput(239.00,372.34)(1.500,-1.000){2}{\rule{0.361pt}{0.800pt}}
\put(204.0,374.0){\rule[-0.400pt]{8.431pt}{0.800pt}}
\put(277,370.84){\rule{0.723pt}{0.800pt}}
\multiput(277.00,371.34)(1.500,-1.000){2}{\rule{0.361pt}{0.800pt}}
\put(242.0,373.0){\rule[-0.400pt]{8.431pt}{0.800pt}}
\put(315,369.84){\rule{0.723pt}{0.800pt}}
\multiput(315.00,370.34)(1.500,-1.000){2}{\rule{0.361pt}{0.800pt}}
\put(280.0,372.0){\rule[-0.400pt]{8.431pt}{0.800pt}}
\put(356,368.84){\rule{0.723pt}{0.800pt}}
\multiput(356.00,369.34)(1.500,-1.000){2}{\rule{0.361pt}{0.800pt}}
\put(318.0,371.0){\rule[-0.400pt]{9.154pt}{0.800pt}}
\put(394,367.84){\rule{0.723pt}{0.800pt}}
\multiput(394.00,368.34)(1.500,-1.000){2}{\rule{0.361pt}{0.800pt}}
\put(359.0,370.0){\rule[-0.400pt]{8.431pt}{0.800pt}}
\put(432,366.84){\rule{0.723pt}{0.800pt}}
\multiput(432.00,367.34)(1.500,-1.000){2}{\rule{0.361pt}{0.800pt}}
\put(397.0,369.0){\rule[-0.400pt]{8.431pt}{0.800pt}}
\put(470,365.84){\rule{0.723pt}{0.800pt}}
\multiput(470.00,366.34)(1.500,-1.000){2}{\rule{0.361pt}{0.800pt}}
\put(435.0,368.0){\rule[-0.400pt]{8.431pt}{0.800pt}}
\put(511,364.84){\rule{0.482pt}{0.800pt}}
\multiput(511.00,365.34)(1.000,-1.000){2}{\rule{0.241pt}{0.800pt}}
\put(473.0,367.0){\rule[-0.400pt]{9.154pt}{0.800pt}}
\put(549,363.84){\rule{0.482pt}{0.800pt}}
\multiput(549.00,364.34)(1.000,-1.000){2}{\rule{0.241pt}{0.800pt}}
\put(513.0,366.0){\rule[-0.400pt]{8.672pt}{0.800pt}}
\put(586,362.84){\rule{0.723pt}{0.800pt}}
\multiput(586.00,363.34)(1.500,-1.000){2}{\rule{0.361pt}{0.800pt}}
\put(551.0,365.0){\rule[-0.400pt]{8.431pt}{0.800pt}}
\put(624,361.84){\rule{0.723pt}{0.800pt}}
\multiput(624.00,362.34)(1.500,-1.000){2}{\rule{0.361pt}{0.800pt}}
\put(589.0,364.0){\rule[-0.400pt]{8.431pt}{0.800pt}}
\put(662,360.84){\rule{0.723pt}{0.800pt}}
\multiput(662.00,361.34)(1.500,-1.000){2}{\rule{0.361pt}{0.800pt}}
\put(627.0,363.0){\rule[-0.400pt]{8.431pt}{0.800pt}}
\put(703,359.84){\rule{0.723pt}{0.800pt}}
\multiput(703.00,360.34)(1.500,-1.000){2}{\rule{0.361pt}{0.800pt}}
\put(665.0,362.0){\rule[-0.400pt]{9.154pt}{0.800pt}}
\put(741,358.84){\rule{0.723pt}{0.800pt}}
\multiput(741.00,359.34)(1.500,-1.000){2}{\rule{0.361pt}{0.800pt}}
\put(706.0,361.0){\rule[-0.400pt]{8.431pt}{0.800pt}}
\put(779,357.84){\rule{0.723pt}{0.800pt}}
\multiput(779.00,358.34)(1.500,-1.000){2}{\rule{0.361pt}{0.800pt}}
\put(744.0,360.0){\rule[-0.400pt]{8.431pt}{0.800pt}}
\put(817,356.84){\rule{0.723pt}{0.800pt}}
\multiput(817.00,357.34)(1.500,-1.000){2}{\rule{0.361pt}{0.800pt}}
\put(782.0,359.0){\rule[-0.400pt]{8.431pt}{0.800pt}}
\put(855,355.84){\rule{0.723pt}{0.800pt}}
\multiput(855.00,356.34)(1.500,-1.000){2}{\rule{0.361pt}{0.800pt}}
\put(820.0,358.0){\rule[-0.400pt]{8.431pt}{0.800pt}}
\put(896,354.84){\rule{0.723pt}{0.800pt}}
\multiput(896.00,355.34)(1.500,-1.000){2}{\rule{0.361pt}{0.800pt}}
\put(858.0,357.0){\rule[-0.400pt]{9.154pt}{0.800pt}}
\put(934,353.84){\rule{0.723pt}{0.800pt}}
\multiput(934.00,354.34)(1.500,-1.000){2}{\rule{0.361pt}{0.800pt}}
\put(899.0,356.0){\rule[-0.400pt]{8.431pt}{0.800pt}}
\put(972,352.84){\rule{0.723pt}{0.800pt}}
\multiput(972.00,353.34)(1.500,-1.000){2}{\rule{0.361pt}{0.800pt}}
\put(937.0,355.0){\rule[-0.400pt]{8.431pt}{0.800pt}}
\put(1010,351.84){\rule{0.723pt}{0.800pt}}
\multiput(1010.00,352.34)(1.500,-1.000){2}{\rule{0.361pt}{0.800pt}}
\put(975.0,354.0){\rule[-0.400pt]{8.431pt}{0.800pt}}
\put(1051,350.84){\rule{0.723pt}{0.800pt}}
\multiput(1051.00,351.34)(1.500,-1.000){2}{\rule{0.361pt}{0.800pt}}
\put(1013.0,353.0){\rule[-0.400pt]{9.154pt}{0.800pt}}
\put(1089,349.84){\rule{0.723pt}{0.800pt}}
\multiput(1089.00,350.34)(1.500,-1.000){2}{\rule{0.361pt}{0.800pt}}
\put(1054.0,352.0){\rule[-0.400pt]{8.431pt}{0.800pt}}
\put(1127,348.84){\rule{0.723pt}{0.800pt}}
\multiput(1127.00,349.34)(1.500,-1.000){2}{\rule{0.361pt}{0.800pt}}
\put(1092.0,351.0){\rule[-0.400pt]{8.431pt}{0.800pt}}
\put(1165,347.84){\rule{0.723pt}{0.800pt}}
\multiput(1165.00,348.34)(1.500,-1.000){2}{\rule{0.361pt}{0.800pt}}
\put(1130.0,350.0){\rule[-0.400pt]{8.431pt}{0.800pt}}
\put(1203,346.84){\rule{0.723pt}{0.800pt}}
\multiput(1203.00,347.34)(1.500,-1.000){2}{\rule{0.361pt}{0.800pt}}
\put(1168.0,349.0){\rule[-0.400pt]{8.431pt}{0.800pt}}
\put(1243,345.84){\rule{0.723pt}{0.800pt}}
\multiput(1243.00,346.34)(1.500,-1.000){2}{\rule{0.361pt}{0.800pt}}
\put(1206.0,348.0){\rule[-0.400pt]{8.913pt}{0.800pt}}
\put(1281,344.84){\rule{0.723pt}{0.800pt}}
\multiput(1281.00,345.34)(1.500,-1.000){2}{\rule{0.361pt}{0.800pt}}
\put(1246.0,347.0){\rule[-0.400pt]{8.431pt}{0.800pt}}
\put(1319,343.84){\rule{0.723pt}{0.800pt}}
\multiput(1319.00,344.34)(1.500,-1.000){2}{\rule{0.361pt}{0.800pt}}
\put(1284.0,346.0){\rule[-0.400pt]{8.431pt}{0.800pt}}
\put(1357,342.84){\rule{0.723pt}{0.800pt}}
\multiput(1357.00,343.34)(1.500,-1.000){2}{\rule{0.361pt}{0.800pt}}
\put(1322.0,345.0){\rule[-0.400pt]{8.431pt}{0.800pt}}
\put(1398,341.84){\rule{0.723pt}{0.800pt}}
\multiput(1398.00,342.34)(1.500,-1.000){2}{\rule{0.361pt}{0.800pt}}
\put(1360.0,344.0){\rule[-0.400pt]{9.154pt}{0.800pt}}
\put(1436,340.84){\rule{0.723pt}{0.800pt}}
\multiput(1436.00,341.34)(1.500,-1.000){2}{\rule{0.361pt}{0.800pt}}
\put(1401.0,343.0){\rule[-0.400pt]{8.431pt}{0.800pt}}
\put(1474,339.84){\rule{0.723pt}{0.800pt}}
\multiput(1474.00,340.34)(1.500,-1.000){2}{\rule{0.361pt}{0.800pt}}
\put(1439.0,342.0){\rule[-0.400pt]{8.431pt}{0.800pt}}
\put(1512,338.84){\rule{0.723pt}{0.800pt}}
\multiput(1512.00,339.34)(1.500,-1.000){2}{\rule{0.361pt}{0.800pt}}
\put(1477.0,341.0){\rule[-0.400pt]{8.431pt}{0.800pt}}
\put(1550,337.84){\rule{0.723pt}{0.800pt}}
\multiput(1550.00,338.34)(1.500,-1.000){2}{\rule{0.361pt}{0.800pt}}
\put(1515.0,340.0){\rule[-0.400pt]{8.431pt}{0.800pt}}
\put(1553.0,339.0){\rule[-0.400pt]{8.431pt}{0.800pt}}
\sbox{\plotpoint}{\rule[-0.200pt]{0.400pt}{0.400pt}}%
\put(131.0,131.0){\rule[-0.200pt]{0.400pt}{88.651pt}}
\put(131.0,131.0){\rule[-0.200pt]{350.991pt}{0.400pt}}
\put(1588.0,131.0){\rule[-0.200pt]{0.400pt}{88.651pt}}
\put(131.0,499.0){\rule[-0.200pt]{350.991pt}{0.400pt}}
\end{picture}
\begin{picture}(1650,540)(0,0)
\put(131.0,169.0){\rule[-0.200pt]{4.818pt}{0.400pt}}
\put(111,169){\makebox(0,0)[r]{-1}}
\put(1568.0,169.0){\rule[-0.200pt]{4.818pt}{0.400pt}}
\put(131.0,263.0){\rule[-0.200pt]{4.818pt}{0.400pt}}
\put(111,263){\makebox(0,0)[r]{ 0}}
\put(1568.0,263.0){\rule[-0.200pt]{4.818pt}{0.400pt}}
\put(131.0,312.0){\rule[-0.200pt]{4.818pt}{0.400pt}}
\put(111,312){\makebox(0,0)[r]{$\beta$}}
\put(1568.0,312.0){\rule[-0.200pt]{4.818pt}{0.400pt}}
\put(131.0,357.0){\rule[-0.200pt]{4.818pt}{0.400pt}}
\put(111,357){\makebox(0,0)[r]{ 1}}
\put(1568.0,357.0){\rule[-0.200pt]{4.818pt}{0.400pt}}
\put(131.0,452.0){\rule[-0.200pt]{4.818pt}{0.400pt}}
\put(111,452){\makebox(0,0)[r]{ 2}}
\put(1568.0,452.0){\rule[-0.200pt]{4.818pt}{0.400pt}}
\put(131.0,131.0){\rule[-0.200pt]{0.400pt}{4.818pt}}
\put(131,90){\makebox(0,0){0}}
\put(131.0,479.0){\rule[-0.200pt]{0.400pt}{4.818pt}}
\put(495.0,131.0){\rule[-0.200pt]{0.400pt}{4.818pt}}
\put(495,90){\makebox(0,0){0.25}}
\put(495.0,479.0){\rule[-0.200pt]{0.400pt}{4.818pt}}
\put(860.0,131.0){\rule[-0.200pt]{0.400pt}{4.818pt}}
\put(860,90){\makebox(0,0){$\alpha$}}
\put(860.0,479.0){\rule[-0.200pt]{0.400pt}{4.818pt}}
\put(1224.0,131.0){\rule[-0.200pt]{0.400pt}{4.818pt}}
\put(1224,90){\makebox(0,0){0.75}}
\put(1224.0,479.0){\rule[-0.200pt]{0.400pt}{4.818pt}}
\put(1588.0,131.0){\rule[-0.200pt]{0.400pt}{4.818pt}}
\put(1588.0,479.0){\rule[-0.200pt]{0.400pt}{4.818pt}}
\put(131.0,131.0){\rule[-0.200pt]{0.400pt}{88.651pt}}
\put(131.0,131.0){\rule[-0.200pt]{350.991pt}{0.400pt}}
\put(1588.0,131.0){\rule[-0.200pt]{0.400pt}{88.651pt}}
\put(131.0,499.0){\rule[-0.200pt]{350.991pt}{0.400pt}}
\put(30,315){\makebox(0,0){$y$}}
\put(859,29){\makebox(0,0){$x$}}
\put(1428,459){\makebox(0,0)[r]{$w(x)$}}
\put(1448.0,459.0){\rule[-0.200pt]{12.286pt}{0.400pt}}
\put(131,169){\usebox{\plotpoint}}
\multiput(858.61,169.00)(0.447,41.765){3}{\rule{0.108pt}{25.167pt}}
\multiput(857.17,169.00)(3.000,135.765){2}{\rule{0.400pt}{12.583pt}}
\put(131.0,169.0){\rule[-0.200pt]{175.134pt}{0.400pt}}
\put(861.0,357.0){\rule[-0.200pt]{175.134pt}{0.400pt}}
\sbox{\plotpoint}{\rule[-0.500pt]{1.000pt}{1.000pt}}%
\sbox{\plotpoint}{\rule[-0.200pt]{0.400pt}{0.400pt}}%
\put(1428,418){\makebox(0,0)[r]{$v(\xi(x))$}}
\sbox{\plotpoint}{\rule[-0.500pt]{1.000pt}{1.000pt}}%
\multiput(1448,418)(20.756,0.000){3}{\usebox{\plotpoint}}
\put(1499,418){\usebox{\plotpoint}}
\put(131,169){\usebox{\plotpoint}}
\put(131.00,169.00){\usebox{\plotpoint}}
\put(151.76,169.00){\usebox{\plotpoint}}
\put(172.35,170.00){\usebox{\plotpoint}}
\put(192.94,171.00){\usebox{\plotpoint}}
\put(213.70,171.00){\usebox{\plotpoint}}
\put(234.29,172.00){\usebox{\plotpoint}}
\put(255.05,172.00){\usebox{\plotpoint}}
\put(275.64,173.00){\usebox{\plotpoint}}
\put(296.39,173.00){\usebox{\plotpoint}}
\put(316.99,174.00){\usebox{\plotpoint}}
\put(337.74,174.00){\usebox{\plotpoint}}
\put(358.50,174.00){\usebox{\plotpoint}}
\put(379.09,175.00){\usebox{\plotpoint}}
\put(399.85,175.00){\usebox{\plotpoint}}
\put(420.44,176.00){\usebox{\plotpoint}}
\put(441.20,176.00){\usebox{\plotpoint}}
\put(461.95,176.00){\usebox{\plotpoint}}
\put(482.55,177.00){\usebox{\plotpoint}}
\put(503.30,177.00){\usebox{\plotpoint}}
\put(524.06,177.00){\usebox{\plotpoint}}
\put(544.65,178.00){\usebox{\plotpoint}}
\put(565.40,178.00){\usebox{\plotpoint}}
\put(586.16,178.00){\usebox{\plotpoint}}
\put(606.92,178.00){\usebox{\plotpoint}}
\put(627.51,179.00){\usebox{\plotpoint}}
\put(648.26,179.00){\usebox{\plotpoint}}
\put(669.02,179.00){\usebox{\plotpoint}}
\put(689.78,179.00){\usebox{\plotpoint}}
\put(710.37,180.00){\usebox{\plotpoint}}
\put(731.12,180.00){\usebox{\plotpoint}}
\put(751.88,180.00){\usebox{\plotpoint}}
\put(772.64,180.00){\usebox{\plotpoint}}
\put(793.39,180.00){\usebox{\plotpoint}}
\put(814.15,180.00){\usebox{\plotpoint}}
\put(834.74,181.00){\usebox{\plotpoint}}
\put(855.49,181.00){\usebox{\plotpoint}}
\multiput(858,181)(0.360,20.752){8}{\usebox{\plotpoint}}
\put(872.26,354.00){\usebox{\plotpoint}}
\put(892.86,353.00){\usebox{\plotpoint}}
\put(913.61,353.00){\usebox{\plotpoint}}
\put(934.13,352.00){\usebox{\plotpoint}}
\put(954.73,351.00){\usebox{\plotpoint}}
\put(975.48,351.00){\usebox{\plotpoint}}
\put(996.07,350.00){\usebox{\plotpoint}}
\put(1016.83,350.00){\usebox{\plotpoint}}
\put(1037.35,349.00){\usebox{\plotpoint}}
\put(1058.11,349.00){\usebox{\plotpoint}}
\put(1078.86,349.00){\usebox{\plotpoint}}
\put(1099.45,348.00){\usebox{\plotpoint}}
\put(1120.21,348.00){\usebox{\plotpoint}}
\put(1140.97,348.00){\usebox{\plotpoint}}
\put(1161.72,348.00){\usebox{\plotpoint}}
\put(1182.48,348.00){\usebox{\plotpoint}}
\put(1203.23,348.00){\usebox{\plotpoint}}
\put(1223.99,348.00){\usebox{\plotpoint}}
\put(1244.65,348.55){\usebox{\plotpoint}}
\put(1265.34,349.00){\usebox{\plotpoint}}
\put(1286.09,349.00){\usebox{\plotpoint}}
\put(1306.85,349.00){\usebox{\plotpoint}}
\put(1327.44,350.00){\usebox{\plotpoint}}
\put(1348.20,350.00){\usebox{\plotpoint}}
\put(1368.79,351.00){\usebox{\plotpoint}}
\put(1389.54,351.00){\usebox{\plotpoint}}
\put(1410.14,352.00){\usebox{\plotpoint}}
\put(1430.85,352.28){\usebox{\plotpoint}}
\put(1451.49,353.00){\usebox{\plotpoint}}
\put(1472.08,354.00){\usebox{\plotpoint}}
\put(1492.84,354.00){\usebox{\plotpoint}}
\put(1513.35,355.00){\usebox{\plotpoint}}
\put(1533.95,356.00){\usebox{\plotpoint}}
\put(1554.70,356.00){\usebox{\plotpoint}}
\put(1575.30,357.00){\usebox{\plotpoint}}
\put(1588,357){\usebox{\plotpoint}}
\sbox{\plotpoint}{\rule[-0.200pt]{0.400pt}{0.400pt}}%
\put(131.0,131.0){\rule[-0.200pt]{0.400pt}{88.651pt}}
\put(131.0,131.0){\rule[-0.200pt]{350.991pt}{0.400pt}}
\put(1588.0,131.0){\rule[-0.200pt]{0.400pt}{88.651pt}}
\put(131.0,499.0){\rule[-0.200pt]{350.991pt}{0.400pt}}
\end{picture}

\caption{$v$ approximates $w$ sufficiently close in the sense of definition \ref{sufficientlyclose}. First plot: The given functions $w$ and $v$. Second plot: The coordinate transformation $\xi$ and its derivative. Third plot: $v(\xi(x))$ approximates $w(x)$ pointwise. In this picture, we chose $\xi(x) := \frac{\beta}{\alpha}x + x (x-\alpha) \frac{(1-{\beta}/{\alpha})}{1-\alpha}$.}
\label{fig:suff_close}
\end{figure}
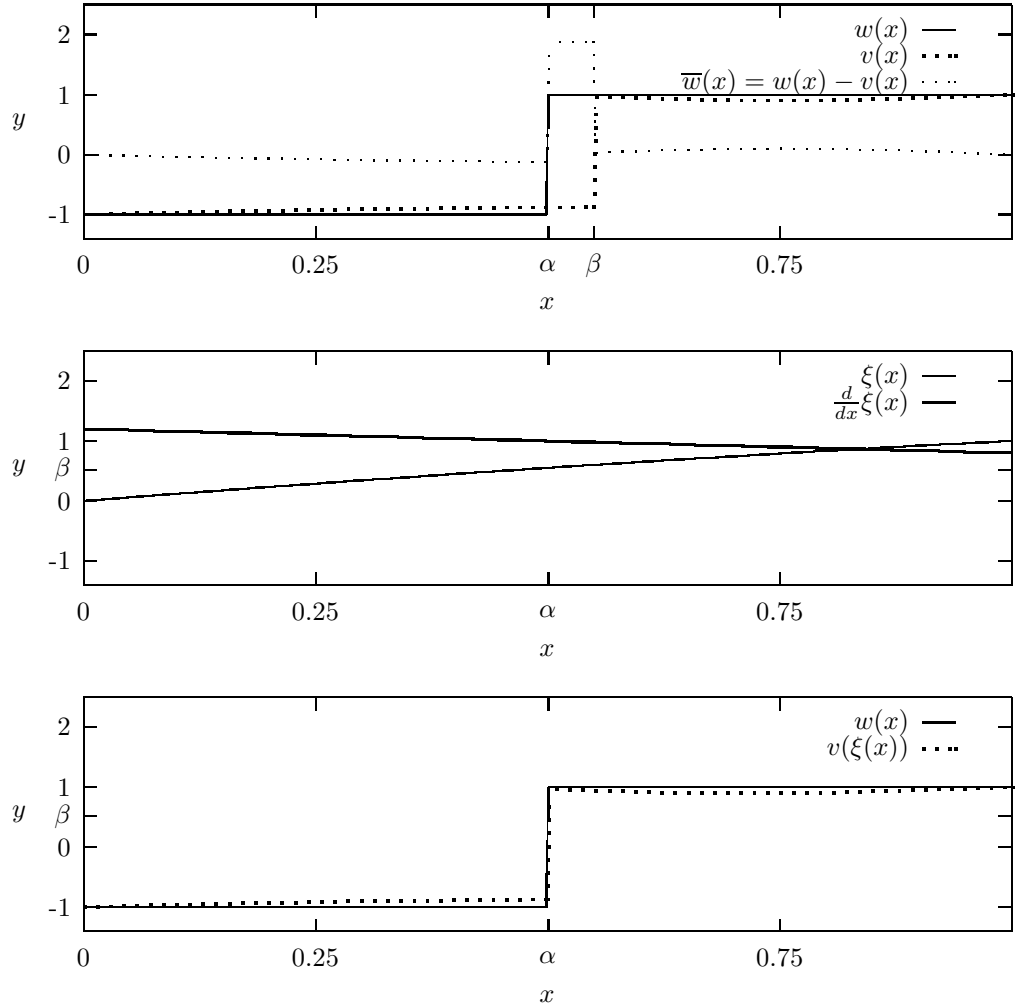

\begin{lemma}
    Let $\xi_1$ and $\xi_2$ be smooth, invertible functions fulfilling the properties \eqref{approx_xistrich1}, \eqref{approx_xistrich2} and \eqref{approx_xistrich3} with domains as given in \eqref{def:xi1} and \eqref{def:xi2}. 
    Then 
    \begin{align} 
      \label{xi_x_Onu}
      \xi_i(x) - x = O(\nu) \quad \forall i = 1,2, 
    \end{align} 
  which, as a special case, implies $\beta - \alpha = O(\nu)$. 
\end{lemma}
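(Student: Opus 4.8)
The plan is to reduce everything to the fundamental theorem of calculus, integrating the pointwise derivative bounds \eqref{approx_xistrich1}--\eqref{approx_xistrich2} from a conveniently chosen boundary point. The only preliminary work is to pin down the value of each $\xi_i$ at the \emph{outer} endpoint of its domain.

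First I would fix the boundary values. Since $\xi_1$ is continuous and invertible on $[0,\alpha]$ with image $[0,\beta]$, it is strictly monotone; and because $\frac{d}{dx}\xi_1 = 1 + O(\nu) > 0$ for $\nu$ small enough, it is in fact increasing. Hence it maps endpoints to endpoints in an orientation-preserving way, so that together with the prescribed $\xi_1(\alpha) = \beta$ from \eqref{approx_xistrich3} one obtains $\xi_1(0) = 0$. The identical argument applied to $\xi_2$ on $[\alpha,1]$, whose image is $[\beta,1]$, yields $\xi_2(1) = 1$.

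Next comes the main estimate. Writing $\xi_1(x) - x = \xi_1(x) - \xi_1(0) - x = \int_0^x (\xi_1'(t) - 1)\,dt$ and inserting the uniform bound $|\xi_1'(t) - 1| \le C\nu$ from \eqref{approx_xistrich1}, I get $|\xi_1(x) - x| \le C\nu\,x \le C\nu$, using $x \le \alpha \le 1$; the estimate is uniform in $x$ precisely because the domain has length at most one and the derivative bound is uniform. For $\xi_2$ I anchor instead at the right endpoint via $\xi_2(1) = 1$, so that $\xi_2(x) - x = -\int_x^1 (\xi_2'(t) - 1)\,dt$, which is again $O(\nu)$ uniformly by \eqref{approx_xistrich2}. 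This establishes \eqref{xi_x_Onu}. The special case then follows by evaluating the $\xi_1$-bound at $x=\alpha$: since $\xi_1(\alpha)=\beta$, we conclude $\beta - \alpha = O(\nu)$.

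The computation is elementary, so there is no genuinely hard step; the one point requiring care -- and the thing I would be most careful to state correctly -- is the anchoring. One must integrate each $\xi_i$ starting from the boundary point whose image is known \emph{a priori} (namely $0$ for $\xi_1$ and $1$ for $\xi_2$), rather than from the interior point $\alpha$, because the value $\beta$ there is exactly the quantity whose closeness to $\alpha$ we are trying to establish; anchoring at $\alpha$ would render the argument circular.
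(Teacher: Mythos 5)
Your proposal is correct and follows essentially the same route as the paper: anchor each $\xi_i$ at the outer endpoint of its domain (using invertibility to get $\xi_1(0)=0$, resp.\ $\xi_2(1)=1$) and integrate the derivative bound via the fundamental theorem of calculus, the paper treating only $i=1$ and declaring $i=2$ analogous. Your monotonicity argument pinning down $\xi_1(0)=0$ is in fact slightly more careful than the paper's bare appeal to invertibility, but the substance is identical.
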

\begin{proof} 
  (We consider only the case $i = 1$. The case $i = 2$ is completely analogous with the obvious interchange of $0$ and $1$.) We have that
  \begin{alignat*}{2}
    \xi_1(0) - 0 &= 0  &\quad& \text{due to the invertibility of $\xi_1$ and } \\ 
    \frac{d}{dx}(\xi_1(x) - x) &= O(\nu) &\quad& \text{due to (\ref{approx_xistrich1}).}
  \end{alignat*}
This proves the claim because we can write
  \begin{align*}
   \xi_1(x) - x = \xi_1(0) - 0 + \int_0^x \frac{d}{dx} \left( \xi_1(\tau) - \tau \right) \ d \tau = O(\nu).
  \end{align*}

\end{proof}
\begin{lemma}
 Let $w$ be a piecewise smooth function with a jump in $x=\alpha$, and let $v$ be sufficiently close to $w$ in the sense of definition \ref{sufficientlyclose}. Then
    \begin{align} 
	\label{eq:x_close}
	v(x) - w(x) &= O(\nu) \quad \forall x \in \Omega \backslash [\alpha,\beta].
    \end{align}
\end{lemma}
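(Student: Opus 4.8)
The plan is to split the set $\Omega \setminus [\alpha,\beta]$ into its two connected components $[0,\alpha)$ and $(\beta,1]$ (recall we assume $\alpha<\beta$) and to treat each with the transformation attached to that side, $\xi_1$ or $\xi_2$ respectively. On each component the proof reduces to combining the corresponding defining relation in \eqref{def:sufficiently_smooth} with the estimate $\xi_i(x)-x=O(\nu)$ from \eqref{xi_x_Onu} and the Lipschitz continuity of $v$ on a single smooth branch.

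First I would consider $x<\alpha$. Since $\xi_1$ is increasing with $\xi_1(\alpha)=\beta$, we have $\xi_1(x)<\beta$, so $\xi_1(x)$ sits strictly on the left smooth branch of $v$ and the one-sided value appearing in \eqref{def:sufficiently_smooth} is simply $v(\xi_1(x))$. Hence $w(x)=v(\xi_1(x))+O(\nu)$, and therefore
\[
  v(x)-w(x)=\bigl(v(x)-v(\xi_1(x))\bigr)+O(\nu).
\]
By \eqref{xi_x_Onu} we have $\xi_1(x)-x=O(\nu)$, and since $x<\alpha<\beta$ and $\xi_1(x)<\beta$, the entire segment joining $x$ and $\xi_1(x)$ stays in the closed left branch $[0,\beta]$, on which $v$ has bounded first derivative. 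The mean value theorem then gives $v(x)-v(\xi_1(x))=O(|\xi_1(x)-x|)=O(\nu)$, so that $v(x)-w(x)=O(\nu)$. The case $x>\beta$ is completely analogous: here $x>\alpha$, so the second line of \eqref{def:sufficiently_smooth} applies, $\xi_2(x)>\beta$ places the argument on the right smooth branch, and combining $w(x)=v(\xi_2(x))+O(\nu)$ with $\xi_2(x)-x=O(\nu)$ and the Lipschitz bound for $v$ on $[\beta,1]$ finishes the estimate.

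The hard part will not be the algebra but the bookkeeping: one must make sure that $x$ and $\xi_i(x)$ always lie on the \emph{same} smooth branch of $v$, i.e.\ that the segment connecting them never crosses the discontinuity of $v$ at $\beta$. This is precisely why the interval $[\alpha,\beta]$ must be excluded, for if $x\in(\alpha,\beta)$ then $\xi_2$ carries $x$ across the jump, $w(x)$ matches the post-shock value of $v$ while $v(x)$ is still pre-shock, and the difference degrades to $O(1)$. Outside $[\alpha,\beta]$, the monotonicity of the $\xi_i$ together with the matching condition $\xi_1(\alpha)=\xi_2(\alpha)=\beta$ in \eqref{approx_xistrich3} guarantees that the segment remains on the correct side of $\beta$, after which the Lipschitz estimate applies routinely.
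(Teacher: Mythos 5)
Your proof is correct and follows essentially the same route as the paper's: decompose $v(x)-w(x)$ as $\bigl(v(x)-v(\xi_i(x))\bigr)+\bigl(v(\xi_i(x))-w(x)\bigr)$, bound the second term by the closeness condition \eqref{def:sufficiently_smooth}, and bound the first by $\xi_i(x)-x=O(\nu)$ from \eqref{xi_x_Onu} together with smoothness of $v$ on the relevant branch. The only cosmetic difference is that the paper uses a Taylor expansion with quadratic remainder where you invoke the mean value theorem, and your explicit bookkeeping that $x$ and $\xi_i(x)$ stay on the same side of $\beta$ is a point the paper leaves implicit (treating only $x<\alpha$ ``without loss of generality'').
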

\begin{proof}
(Without loss of generality, $x < \alpha $)
\begin{align*} 
				v(x) - w(x) &\stackrel{\hphantom{(8.4)}}{=} v(\xi_1(x)) - w(x) + v(x) - v(\xi_1(x)) \\ 
				&\stackrel{(\ref{def:sufficiently_smooth})}=O(\nu) + v(x) - v(\xi_1(x)) \\
				&\stackrel{\hphantom{(8.4)}}= O(\nu) + v'(\xi_1(x)) \cdot (x - \xi_1(x)) + O(\|x-\xi_1(x)\|^2)
				\stackrel{(\ref{xi_x_Onu})}= O(\nu).
\end{align*}
\end{proof}

\subsection{Linearization of the Rankine-Hugoniot Condition}
Not every discontinuity of $w$ is permissible. A very basic restriction following directly from the weak formulation of a hyperbolic conservation law is the Rankine-Hugoniot condition, which states that the flux has a (weak) divergence, even in the vicinity of a shock, more precisely,  
\begin{align} 
  \label{rh_intbc}
  [f(w)] &:= f(w(\alpha))^+ - f(w(\alpha))^- = 0. 
\end{align}  
Let us again assume that we are interested in a perturbed solution $v = w + \overline w$ (in the sense of definition \ref{defsufficientlysmooth}) which has its only shock at $x = \beta = \alpha + \overline \alpha$. In this section, we investigate how the Rankine-Hugoniot condition changes for such a $v$.

Let us first state the following lemma: 
\begin{lemma}
 Let $w$ be a piecewise smooth function with a jump in $x=\alpha$, and let $v$ be sufficiently close to $w$ in the sense of definition \ref{sufficientlyclose}. Furthermore, let $f \equiv f(w)$ be a smooth function. Then 
\begin{align} 
  \label{linearization_of_the_jump_in_f}
  [f(v)] =  & [f(w)] + f'(w(\beta)) (v(\beta)-w(\beta))^+ -  f'(w(\alpha))^- (v(\alpha)-w(\alpha))^-   \\ \notag & + \overline \alpha [\frac{d}{dx} f(w(x))] + o(\nu).
\end{align} 
Note that $[f(v)]$ denotes a jump at $x=\beta$, while $[f(w)]$ and $[\frac{d}{dx} f(w(x))]$ denote jumps at $x = \alpha$.
\end{lemma}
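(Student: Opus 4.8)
The plan is to compute the jump $[f(v)] = f(v(\beta)^+) - f(v(\beta)^-)$ directly and to reduce every quantity to the exact solution $w$ and to the differences $v-w$, keeping careful track of the fact that, because $\alpha<\beta$, the two functions sit on \emph{different} branches in the intermediate interval: for $x$ slightly to the left of $\beta$ the function $v$ is still on its (smooth) left branch, whereas $w$ is already on its right branch. Throughout I will use that $\overline\alpha = \beta-\alpha = O(\nu)$ (eq.~\eqref{xi_x_Onu}), that $(v-w)(\beta)^+ = O(\nu)$ and $(v-w)(\alpha)^- = O(\nu)$ (eq.~\eqref{eq:x_close}, taking the one-sided limits at the endpoints of $[\alpha,\beta]$), and that $f$ together with the smooth branches of $v$ and $w$ have bounded first and second derivatives, so that Taylor remainders of the form $O(\nu^2)$ and $O(\overline\alpha^2)$ are $o(\nu)$.

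First I treat the right state. Since $w$ is smooth across $\beta$, we have $w(\beta)^+ = w(\beta)$, and a Taylor expansion of $f$ about $w(\beta)$ gives
\begin{align*}
 f(v(\beta)^+) = f(w(\beta)^+) + f'(w(\beta))\,(v(\beta)-w(\beta))^+ + o(\nu),
\end{align*}
the remainder being $O(\nu^2)$ because $(v(\beta)-w(\beta))^+ = O(\nu)$. Next I transport $f(w(\beta)^+)$ back to the shock along the smooth right branch of $w$ over $[\alpha,\beta]$, i.e.\ I Taylor-expand $x \mapsto f(w(x))$ at $x=\alpha^+$,
\begin{align*}
 f(w(\beta)^+) = f(w(\alpha)^+) + \overline\alpha\,\frac{d}{dx}f(w(x))\big|^{+}_{\alpha} + o(\nu),
\end{align*}
using $\overline\alpha = O(\nu)$ and the bounded second derivative of $f(w(\cdot))$.

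The analogous step for the left state is the crux of the proof. Here the natural smooth branch joining $v(\beta)^-$ to the shock region is the left branch of $v$, which is smooth on all of $[0,\beta)$ (and in particular at $\alpha$, so $v(\alpha)^-=v(\alpha)$); expanding $x \mapsto f(v(x))$ at $x=\alpha$ yields
\begin{align*}
 f(v(\beta)^-) = f(v(\alpha)) + \overline\alpha\,\frac{d}{dx}f(v(x))\big|^{-}_{\alpha} + o(\nu).
\end{align*}
The zeroth-order term is reduced to $w$ by one more Taylor step, $f(v(\alpha)) = f(w(\alpha)^-) + f'(w(\alpha))^-(v(\alpha)-w(\alpha))^- + o(\nu)$, again because $(v(\alpha)-w(\alpha))^- = O(\nu)$. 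The delicate point, which I expect to be the main obstacle, is the first-order term: I must replace the flux derivative of $v$ by that of $w$. Since this quantity is premultiplied by $\overline\alpha = O(\nu)$, only $o(1)$ accuracy is needed, and I would settle it using the smallness of the residual together with the balance law for $w$: from $f(v)_x = -S(v) + O(\mu)$ and $f(w)_x = -S(w)$ one obtains, at $x=\alpha^-$,
\begin{align*}
 \frac{d}{dx}f(v(x))\big|^{-}_{\alpha} - \frac{d}{dx}f(w(x))\big|^{-}_{\alpha} = -\big(S(v(\alpha)) - S(w(\alpha)^-)\big) + O(\mu) = O(\nu)+O(\mu) = o(1),
\end{align*}
by smoothness of $S$ and $(v(\alpha)-w(\alpha))^- = O(\nu)$; multiplying by $\overline\alpha$ keeps the error $o(\nu)$. (Alternatively the same estimate follows from differentiable closeness of $v$ to $w$ through the coordinate transform $\xi_1$, whose derivative satisfies $\xi_1'=1+O(\nu)$ with bounded $\xi_1''$.)

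It remains to assemble the pieces. Subtracting the left-state expansion from the right-state expansion and using $f(w(\alpha)^+)-f(w(\alpha)^-)=[f(w)]$ together with $\frac{d}{dx}f(w(x))|^{+}_{\alpha}-\frac{d}{dx}f(w(x))|^{-}_{\alpha}=[\frac{d}{dx}f(w(x))]$, the two first-order contributions combine into the single shock term $\overline\alpha[\frac{d}{dx}f(w(x))]$, while the two linear-in-$(v-w)$ terms survive with opposite signs and different evaluation points ($w(\beta)$ on the right, $w(\alpha)^-$ on the left). This is exactly \eqref{linearization_of_the_jump_in_f}, and every discarded remainder is $o(\nu)$.
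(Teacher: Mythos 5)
Your proof is correct and follows essentially the same route as the paper's: linearize each one-sided limit of $f(v)$ at $\beta$ by a Taylor expansion in the state variable plus a transport term along the appropriate smooth branch (the right branch of $w$ for the $+$ side, the left branch of $v$ for the $-$ side), then replace $\frac{d}{dx}f(v(x))\big|_{\alpha}^{-}$ by $\frac{d}{dx}f(w(x))\big|_{\alpha}^{-}$ using the residual smallness, the prefactor $\overline\alpha=O(\nu)$ turning that error into $o(\nu)$. The only differences are cosmetic: you spell out the $+$ side that the paper dismisses as ``analog,'' and you state the replacement error more carefully as $O(\mu)+O(\nu)$ (via $S(v)-S(w)$ and the balance law for $w$) where the paper just says $O(\mu)$.
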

\begin{proof}
 We have that 
 \begin{align*} 
			f(v(\beta))^- &= f(w(\alpha))^- + f(v(\beta))^- - f(w(\alpha))^- \\
			&= f(w(\alpha))^- + f(v(\alpha))^- - f(w(\alpha))^- + f(v(\beta))^- - f(v(\alpha))^- \\
			&= f(w(\alpha))^- + f'(w(\alpha))^- (v(\alpha)-w(\alpha))^- + \frac{d (f \circ v)}{dx} (\alpha)^- \cdot \overline \alpha + O(\nu^2) \\
			&= f(w(\alpha))^- + f'(w(\alpha))^- (v(\alpha)-w(\alpha))^- + \frac{d (f \circ w)}{dx} (\alpha)^- \cdot \overline \alpha + o(\nu) .
\end{align*}
The last step is true because by replacing $\frac{d (f \circ v)}{dx}$ by $\frac{d (f \circ w)}{dx}$ we make an $O(\mu)$ error which is augmented to $O(\mu \nu) = o(\nu)$ by multiplying it with $\overline \alpha$. 
By treating $f(v(\beta))^+$ in an analog manner, and then subtracting $f(v(\beta))^-$ from $f(v(\beta))^+$, we get the claimed identity (\ref{linearization_of_the_jump_in_f}). 
\end{proof}

Let us, for the ease of notation, define 
\begin{align}
 \label{definition_strange_jump}
 [f'(w) \overline w] &:= f'(w(\beta)) (v(\beta)-w(\beta))^+ -  f'(w(\alpha))^- (v(\alpha)-w(\alpha))^-.
\end{align}
Inserting \eqref{definition_strange_jump} into \eqref{linearization_of_the_jump_in_f}, exploiting the Rankine-Hugoniot condition as given in (\ref{rh_intbc}), and assuming that $w$ solves (\ref{eq:conservation_law}), the jump in $f(v)$ can be linearized as
\begin{align}
 \label{lin_F}
 [f(v)] = [f'(w) \overline w] + \overline \alpha [\frac{d}{dx} f(w(x))] + o(\nu).
\end{align}

\subsection{Linearization of the Functional}
We are interested in computing the changes in the functional 
\begin{align} 
\label{intbcfunct}
\JJ(w) := \int_{\Omega} p(w) \dx,  
\end{align}
 with $p$ being sufficiently regular. Again, we assume that $v$ is sufficiently close to $w$. We can then compute
\begin{align*} 
				\JJ(v) - \JJ(w) &\stackrel{\hphantom{(\ref{eq:x_close})}}= \int_{\Omega} p(v) - p(w) \dx \\
				&\stackrel{\hphantom{(\ref{eq:x_close})}}= \int_{\Omega \backslash [\alpha, \beta]} p'(w) (v - w) \dx + O(\nu^2) + \int_{\alpha}^{\beta} p(v) - p(w) \dx \\
				&\stackrel{\hphantom{(\ref{eq:x_close})}}= \int_{\Omega \backslash [\alpha, \beta]} p'(w) (v - w) \dx + O(\nu^2) + (\beta - \alpha) (p(v(\alpha)) - p(w(\alpha))^+) \\ &\hphantom{\stackrel{\hphantom{(\ref{eq:x_close})}}=}+ O(\nu^2) \\
				&\stackrel{(\ref{eq:x_close})}= \int_{\Omega \backslash [\alpha, \beta]} p'(w) (v - w) \dx + \overline \alpha (p(w(\alpha))^- - p(w(\alpha))^+) + O(\nu^2) \\
				&\stackrel{\hphantom{(\ref{eq:x_close})}}= \int_{\Omega \backslash [\alpha, \beta]} p'(w) (v - w) \dx - \overline \alpha [p(w)] + O(\nu^2) ,
\end{align*}
and in summary, we have the following lemma:
\begin{lemma}
Let $w$ be a piecewise smooth function with a jump in $x=\alpha$, and let $v$ be sufficiently close to $w$ in the sense of definition \ref{sufficientlyclose}. Let $\JJ$ be given by (\ref{intbcfunct}). Then 
\begin{align}
 \label{lin_J}
 \JJ(v) - \JJ(w) = \int_{\Omega \backslash [\alpha, \beta]} p'(w) (v-w) \dx - \overline \alpha [p(w)] + O(\nu^2).
\end{align}

\end{lemma}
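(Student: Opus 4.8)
The plan is to write $\JJ(v)-\JJ(w)=\int_{\Omega}\bigl(p(v)-p(w)\bigr)\dx$ and split the domain of integration into the \emph{smooth region} $\Omega\backslash[\alpha,\beta]$, where both $w$ and $v$ are regular and pointwise close, and the \emph{thin layer} $[\alpha,\beta]$ lying between the two shock positions. These two pieces carry the two terms on the right-hand side of \eqref{lin_J}, and they must be treated by entirely different expansions: a Taylor expansion of the integrand on the smooth region, and a width-times-height estimate on the layer.

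On the smooth region I would invoke \eqref{eq:x_close}, which gives $v-w=O(\nu)$ uniformly there. Since $p$ is smooth, a first-order Taylor expansion yields
\begin{align*}
 p(v)-p(w) = p'(w)(v-w) + O(\nu^2)
\end{align*}
pointwise, and integrating over the bounded region $\Omega\backslash[\alpha,\beta]$ produces exactly $\int_{\Omega\backslash[\alpha,\beta]} p'(w)(v-w)\dx + O(\nu^2)$, i.e. the first term of the claim. This step is routine; the content is in the layer.

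On $[\alpha,\beta]$ the width is $\beta-\alpha=\overline\alpha=O(\nu)$ by \eqref{xi_x_Onu}. There $w$ sits entirely on its right branch and $v$ entirely on its own left branch, so both $p(w)$ and $p(v)$ are smooth on this interval and vary by only $O(\nu)$ across a length-$O(\nu)$ interval. Freezing the integrand at $x=\alpha$ therefore costs only $O(\nu^2)$, giving
\begin{align*}
 \int_{\alpha}^{\beta}\bigl(p(v)-p(w)\bigr)\dx = \overline\alpha\,\bigl(p(v(\alpha))-p(w(\alpha))^+\bigr) + O(\nu^2).
\end{align*}
It then remains to identify $p(v(\alpha))$ with $p(w(\alpha))^-$: the left-branch closeness relation \eqref{def:sufficiently_smooth} evaluated as $x\to\alpha^-$, together with $\xi_1(\alpha)=\beta$ from \eqref{approx_xistrich3} and $\beta-\alpha=O(\nu)$, gives $v(\alpha)=w(\alpha)^-+O(\nu)$, and this $O(\nu)$ correction, multiplied by the prefactor $\overline\alpha=O(\nu)$, is absorbed into $O(\nu^2)$. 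Hence the layer contributes $\overline\alpha\,\bigl(p(w(\alpha))^--p(w(\alpha))^+\bigr)+O(\nu^2)=-\overline\alpha\,[p(w)]+O(\nu^2)$, and adding the two contributions gives \eqref{lin_J}.

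The main obstacle is the bookkeeping inside the layer. One must check that \emph{every} approximation made there is genuinely second order — that the $O(\nu)$ errors from freezing the integrand and from replacing $v(\alpha)$ by $w(\alpha)^-$ each appear only after multiplication by the $O(\nu)$ factor $\overline\alpha$, so that they collapse to $O(\nu^2)$ and do not pollute the leading $O(\nu)$ jump term $-\overline\alpha\,[p(w)]$. The smoothness of $p$, $v$, and of the two branches of $w$ near $\alpha$, combined with the uniform length bound $\beta-\alpha=O(\nu)$, is exactly what guarantees this; all other steps are straightforward.
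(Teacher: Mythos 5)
Your proof is correct and follows essentially the same route as the paper: split $\int_\Omega (p(v)-p(w))\dx$ into the smooth region $\Omega\backslash[\alpha,\beta]$ (Taylor expansion using \eqref{eq:x_close}) and the layer $[\alpha,\beta]$ (freeze the integrand at $\alpha$, pick up $\overline\alpha\,(p(v(\alpha))-p(w(\alpha))^+)$, and replace $p(v(\alpha))$ by $p(w(\alpha))^-$ at cost $O(\nu^2)$). The only cosmetic difference is that you re-derive $v(\alpha)=w(\alpha)^-+O(\nu)$ from \eqref{def:sufficiently_smooth} and \eqref{approx_xistrich3}, whereas the paper simply cites the earlier lemma \eqref{eq:x_close} at $x\to\alpha^-$.
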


\subsection{Adjoint Approach}
In this section, we put together the information from the previous subsections, and show that the adjoint error control works under suitable assumptions as usual. 
We make the following \emph{consistent} modification to the functional $\JJ$ and consider
\begin{align}
 \label{modified_augmented_int}
 J(w) &:= \JJ(w) - z_{\alpha}^T [f(w)] 
\end{align}
instead of $\JJ$ as in (\ref{intbcfunct}). $z_{\alpha} \in \R^d$ is a parameter that will be determined later. The modification is consistent, as $\JJ(w) = J(w)$ for a solution $w$ to (\ref{eq:conservation_law}). The latter is due to the fact that $[f(w)]$ vanishes. The same modification has already been done in \cite{GP00}. 

We assume that the dual solution $z$ is given as in (\ref{eq:adjoint}), and we additionally assume that it is at least Lipschitz-continuous. This is in good agreement with both our numerical experiences and Tadmor's theory for scalar conservation laws proposed in \cite{Tadmor}. In this section, we do not care about boundary conditions at all, as the focus is just on the behavior of the adjoint in the shock. We thus assume that all terms occurring at the (physical) boundary vanish, more precisely, 
\begin{align}
 \label{eq:cond_bdry}
  v(0) - w(0) = v(1) - w(1) = 0.
\end{align}

Putting all our information together, we can state the following theorem:
\begin{satz}\label{satz_adjoint_mit_bc}
 Let $w$ be a piecewise smooth, exact solution to (\ref{eq:conservation_law}) with a jump at $x = \alpha$, and $v$ be an approximation to $w$ in the sense of definition \ref{defsufficientlysmooth}, for which additionally holds $v = w$ at the boundary, i.e., $\overline w = v-w$ vanishes at $x = 0$ and $x = 1$. Furthermore, let $z$ be a smooth (at least Lipschitz-continuous) solution to (\ref{eq:adjoint}).
 The functional $J$ is defined as in (\ref{modified_augmented_int}) for a sufficiently smooth function $p \equiv p(w)$. Upon choosing $z_{\alpha} := z(\alpha)$, we can write 
\begin{align}
\label{fehlerdarstellungmitinternerrb}
 J(v) - J(w) = &\int_{\Omega} z^T (f(v)_x + S(v)) \ dx + \overline \alpha \left(- z^T(\alpha) [\frac{d}{dx} f(w) ] - [p(w)] \right) + o(\nu). 
\end{align}
\end{satz}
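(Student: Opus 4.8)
The plan is to evaluate the two sides of \eqref{fehlerdarstellungmitinternerrb} independently and reconcile them, the only delicate point being the thin layer $[\alpha,\beta]$ of width $\overline\alpha = O(\nu)$ on which $v$ and $w$ differ at order $O(1)$. First I would use that $w$ solves \eqref{eq:conservation_law}, so that the Rankine--Hugoniot condition \eqref{rh_intbc} gives $[f(w)]=0$ and hence, by the definition \eqref{modified_augmented_int} of $J$, that $J(v)-J(w) = \JJ(v)-\JJ(w) - z_\alpha^T[f(v)]$. Inserting the linearization of the functional \eqref{lin_J} and of the flux jump \eqref{lin_F}, and recalling the choice $z_\alpha = z(\alpha)$, this becomes
\begin{align*}
 J(v) - J(w) = \int_{\Omega\backslash[\alpha,\beta]} p'(w)(v-w)\dx - \overline\alpha[p(w)] - z_\alpha^T[f'(w)\overline w] - \overline\alpha\, z^T(\alpha)[\tfrac{d}{dx}f(w)] + o(\nu).
\end{align*}
Since the two $\overline\alpha$-terms here already coincide with those asserted in \eqref{fehlerdarstellungmitinternerrb}, the whole theorem reduces to the residual identity
\begin{align*}
 \mathcal{R}(z,v) = \int_{\Omega\backslash[\alpha,\beta]} p'(w)(v-w)\dx - z_\alpha^T[f'(w)\overline w] + o(\nu),
\end{align*}
where $\mathcal{R}(z,v) = \int_\Omega z^T\big(f(v)_x + S(v)\big)\dx$ is the residual inner product introduced in the introduction.

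To prove this identity I would integrate $\mathcal{R}(z,v)$ by parts on $[0,\beta]$ and $[\beta,1]$, the sole discontinuity of $v$ sitting at $\beta$. Using the boundary condition \eqref{eq:cond_bdry} to replace $f(v)$ by $f(w)$ at $x=0,1$, and expressing the physical boundary values through the corresponding integration by parts of $0 = \int_\Omega z^T(f(w)_x+S(w))\dx$ (legitimate since $f(w)$ is continuous across $\alpha$ by \eqref{rh_intbc}, so the jump term there vanishes), I obtain
\begin{align*}
 \mathcal{R}(z,v) = -\int_\Omega z_x^T\big(f(v)-f(w)\big)\dx + \int_\Omega z^T\big(S(v)-S(w)\big)\dx - z(\beta)^T[f(v)].
\end{align*}
On $\Omega\backslash[\alpha,\beta]$ the closeness estimate \eqref{eq:x_close} permits the Taylor expansions $f(v)-f(w) = f'(w)(v-w)+O(\nu^2)$ and $S(v)-S(w)=S'(w)(v-w)+O(\nu^2)$; combining the two integrands and invoking the adjoint equation \eqref{eq:adjoint} in the form $-f'(w)^Tz_x+S'(w)^Tz = p'(w)$ collapses this contribution to $\int_{\Omega\backslash[\alpha,\beta]} p'(w)(v-w)\dx + O(\nu^2)$. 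The jump term is handled with \eqref{lin_F} and the Lipschitz bound $z(\beta)=z(\alpha)+O(\nu)$, yielding $-z(\beta)^T[f(v)] = -z_\alpha^T[f'(w)\overline w] - \overline\alpha\, z^T(\alpha)[\tfrac{d}{dx}f(w)] + o(\nu)$.

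The hard part is the layer integral over $[\alpha,\beta]$, which is only $O(\nu)$ wide but carries an $O(1)$ integrand. Here I would exploit the geometry encoded in Definition~\ref{sufficientlyclose}: on $[\alpha,\beta]$ the solution $v$ still takes its left state while $w$ has already jumped to its right state, and from $v(\beta)^- = w(\alpha)^- + O(\nu)$ (the matching of left states, since $\xi_1(\alpha)=\beta$) together with $w(x)=w(\alpha)^+ + O(\nu)$ on the layer one finds
\begin{align*}
 f(v)-f(w) = f(w(\alpha)^-)-f(w(\alpha)^+) + O(\nu) = -[f(w)] + O(\nu) = O(\nu)
\end{align*}
by Rankine--Hugoniot. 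Hence the flux part of the layer integral is $O(\nu^2)=o(\nu)$ and only the source part survives, which evaluates to $\int_\alpha^\beta z^T(S(v)-S(w))\dx = -\overline\alpha\, z^T(\alpha)[S(w)] + o(\nu)$. The decisive final step is to differentiate the balance law \eqref{eq:conservation_law} on each side of the shock, $\tfrac{d}{dx}f(w) = -S(w)$, so that $-[S(w)] = [\tfrac{d}{dx}f(w)]$ and the layer contributes exactly $+\overline\alpha\, z^T(\alpha)[\tfrac{d}{dx}f(w)]$. This cancels the matching term of $-z(\beta)^T[f(v)]$, establishes the residual identity, and hence proves \eqref{fehlerdarstellungmitinternerrb}. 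I expect this cancellation --- in particular the observation that Rankine--Hugoniot suppresses the flux contribution of the layer while the source contribution reproduces $[\tfrac{d}{dx}f(w)]$ --- to be the crux of the argument; note that only Lipschitz regularity of $z$ is used throughout.
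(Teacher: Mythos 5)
Your proposal is correct, and it reaches \eqref{fehlerdarstellungmitinternerrb} by a route that overlaps with, but genuinely differs from, the paper's. Both arguments use the same ingredients up front: the reduction $J(v)-J(w)=\JJ(v)-\JJ(w)-z_\alpha^T[f(v)]$ via \eqref{rh_intbc}, the linearizations \eqref{lin_J} and \eqref{lin_F}, the adjoint equation \eqref{eq:adjoint}, integration by parts, the Lipschitz bound $z(\beta)=z(\alpha)+O(\nu)$, and the choice $z_\alpha=z(\alpha)$ to cancel the $[f'(w)\overline w]$ terms of \eqref{definition_strange_jump}. The difference is in direction and in the treatment of the layer $[\alpha,\beta]$. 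The paper works forward from $\JJ(v)-\JJ(w)$, transforms it into $\int_{\Omega\backslash[\alpha,\beta]}z^T(f(v)_x+S(v))\dx$ plus jump terms, and then extends the integral to all of $\Omega$ by invoking the residual condition $r(v)=O(\mu)$ from Definition \ref{defsufficientlysmooth} a second time, so that $\int_\alpha^\beta z^T r(v)\dx=O(\nu\mu)=o(\nu)$. You instead reduce the theorem to a ``residual identity'' for $\mathcal{R}(z,v)$ and evaluate that quantity independently by parts over $[0,\beta]\cup[\beta,1]$; your layer contribution $-\int_\alpha^\beta z_x^T(f(v)-f(w))\dx+\int_\alpha^\beta z^T(S(v)-S(w))\dx$ is computed exactly, with Rankine--Hugoniot suppressing the flux part and the one-sided balance law $f(w)_x=-S(w)$ turning the source part into $+\overline\alpha\, z^T(\alpha)[\frac{d}{dx}f(w)]$, which then cancels internally against the jump term $-z^T(\beta)[f(v)]$. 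This buys a more explicit accounting of what happens on the layer (the residual bound $O(\mu)$ is needed only once, hidden inside \eqref{lin_F}, rather than twice), and it makes transparent that the surviving $[\frac{d}{dx}f(w)]$ term in the theorem comes solely from the functional correction $-z_\alpha^T[f(v)]$; the paper's route is shorter because it simply discards the layer. One small point you should tighten: to conclude $f(v)-f(w)=-[f(w)]+O(\nu)$ and $S(v)-S(w)=-[S(w)]+O(\nu)$ \emph{throughout} the layer you need $v(x)=w(\alpha)^-+O(\nu)$ for all $x\in[\alpha,\beta)$, not merely the single matching $v(\beta)^-=w(\alpha)^-+O(\nu)$; this follows from Definition \ref{defsufficientlysmooth} because $\xi_1$ maps $[0,\alpha]$ onto $[0,\beta]$ with $\xi_1(x)-x=O(\nu)$ (the same mechanism as in the proof of \eqref{eq:x_close}), so every point of the layer is of the form $\xi_1(x)$ with $x=\alpha-O(\nu)$.
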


\begin{proof} The proof is a direct computation, it exploits the already known linearizations of both $[f(w)]$ and $\JJ(w)$:

\begin{alignat*}{4}
 \JJ(v) - \JJ(w) 
    &\overset{\eqref{lin_J}}=
     &&\int_{\Omega \backslash [\alpha, \beta]} p'(w) (v - w) \ dx 
    -\overline \alpha [p(w)] + o(\nu) \\ 
\displaybreak[0]
    &\centering \underset{\hphantom{\eqref{lin_J}}}{\stackrel{\eqref{eq:adjoint}}=}
    &&\int_{\Omega \backslash [\alpha, \beta]} (-f'(w)^T z_x + S'(w)^T z) (v - w) \ dx 
    -\overline \alpha [p(w)] +o(\nu) \\
\displaybreak[0]
    &\stackrel{\hphantom{\eqref{lin_J}}}= 
    && \int_{\Omega \backslash [\alpha, \beta]} (z^T_x \left( f(w) - f(v) \right) 
    + z^T \left( S(v) - S(w) \right) \dx -\overline \alpha [p(w)] +o(\nu) \\
\displaybreak[0]
    &\underset{\hphantom{\eqref{lin_J}}}{\stackrel{(\ref{eq:conservation_law})}=} &&\int_{\Omega \backslash [\alpha, \beta]} z^T (f(v)_x + S(v)) \ dx \\
    &&& - z^T(\alpha) f'(w(\alpha))(v(\alpha) - w(\alpha))^- 
    + z^T(\beta) f'(w(\beta))(v(\beta) - w(\beta))^+ \\
    &&&-\overline \alpha [p(w)] + o(\nu) \\
\displaybreak[0]
    & \underset{\hphantom{\eqref{lin_J}}}{\stackrel{(\ref{definition_strange_jump})}=} &&\int_{\Omega \backslash [\alpha, \beta]} z^T (f(v)_x + S(v)) \ dx + z^T(\alpha) [f'(w)\overline w]-\overline \alpha [p(w)] + o(\nu),
\end{alignat*}
where the last step is allowed due to the assumed Lipschitz-continuity of $z$, i.e., $z(\beta) = z(\alpha) + O(\nu)$;  and the fact that $[f'(w) \overline w]$ is of order $\nu$. 

Based on this computation, we can conclude that 
\begin{align}
 J(v) - J(w) \overset{\eqref{lin_F}}= &\int_{\Omega} z^T (f(v)_x + S(v)) \dx - \int_{\alpha}^{\beta} z^T (f(v)_x + S(v)) \dx + z^T(\alpha) [f'(w)\overline w] \\
	      \notag
	       &-z_{\alpha}^T \left( [f'(w) \overline w] + \overline \alpha [\frac{d}{dx}f(w)] \right) -\overline \alpha [p(w)] + o(\nu) \\
	     \stackrel{\hphantom{\eqref{lin_F}}}= &\int_{\Omega} z^T (f(v)_x + S(v)) \dx  + z^T(\alpha) [f'(w)\overline w] \\
	      \notag
	       &-z_{\alpha}^T \left( [f'(w) \overline w] + \overline \alpha [\frac{d}{dx}f(w)] \right) -\overline \alpha [p(w)] + o(\nu) + O(\nu \mu),
\end{align}
where the last step is true because $f(v)_x + S(v)$ is of order $\mu$.
Now upon choosing $z_{\alpha} := z(\alpha)$, we proved our claim (\ref{fehlerdarstellungmitinternerrb}) as the terms involving $[f'(w) \overline w]$ cancel each other. 
\end{proof}

Let us now make the following definition of what we mean by \emph{interior boundary condition}: 
\begin{definition}\label{def:ib}\index{Interior Boundary Condition}
 A function $z$ fulfills the \emph{interior boundary condition} with respect to $p$ and the shock position $\alpha$, iff
 \begin{align}
  \label{dual_boundary_condition}
  z^T(\alpha) [f(w)_x] = -[p(w)]
 \end{align}
 for the solution $w$ to \eqref{eq:conservation_law}. 
\end{definition}

In the next section (section \ref{sec:convergence_intbc}), we prove that the solution $z$ to \eqref{eq:adjoint}, under standard assumptions, fulfills (\ref{dual_boundary_condition}).

\begin{korollar}
 Suppose that the adjoint solution $z$ as given in (\ref{eq:adjoint}) fulfills the interior boundary condition (\ref{dual_boundary_condition}), we have under the assumptions of Theorem \ref{satz_adjoint_mit_bc} the \emph{usual} adjoint error representation
\begin{align*}
 J(v) - J(w) = &\int_{\Omega} z^T (f(v)_x + S(v)) \ dx + o(\nu).
\end{align*}
\end{korollar}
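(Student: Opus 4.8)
The plan is to obtain the corollary as an immediate consequence of Theorem~\ref{satz_adjoint_mit_bc}: the interior boundary condition is precisely the statement that the coefficient of the shock-location error $\overline\alpha$ in the representation \eqref{fehlerdarstellungmitinternerrb} vanishes, so that the internal error term disappears and only the residual inner product together with the $o(\nu)$ remainder survive. No new analysis is needed beyond a single substitution into an identity already proved.

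Concretely, I would start from the error representation established in Theorem~\ref{satz_adjoint_mit_bc},
\begin{align*}
 J(v) - J(w) = \int_{\Omega} z^T (f(v)_x + S(v)) \dx + \overline \alpha \left( - z^T(\alpha) [\frac{d}{dx} f(w)] - [p(w)] \right) + o(\nu),
\end{align*}
and first observe that the bracketed quantity $[\frac{d}{dx} f(w)]$ appearing here is the same object as $[f(w)_x]$ in the interior boundary condition \eqref{dual_boundary_condition}: both denote the jump of the spatial derivative of $f(w)$ across the shock located at $x = \alpha$. This notational identification is the only preliminary step.

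The single substitution to carry out is then the interior boundary condition $z^T(\alpha)[f(w)_x] = -[p(w)]$ into the parenthesized factor. Performing it gives $-z^T(\alpha)[f(w)_x] - [p(w)] = [p(w)] - [p(w)] = 0$, so that the entire $\overline\alpha$-weighted term vanishes identically, and what remains is exactly the claimed usual adjoint error representation. The modified functional $J$ may be replaced by $\JJ$ throughout, since by construction $J(w) = \JJ(w)$ whenever the Rankine--Hugoniot condition \eqref{rh_intbc} holds.

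I expect essentially no obstacle here; the only points requiring a moment's care are the sign bookkeeping and the matching of the two jump notations. I would emphasize, however, that the conclusion is \emph{unconditional} in $\overline\alpha$: it is the coefficient itself that vanishes, not merely its product with a small $\overline\alpha$. This is why the interior boundary condition \eqref{dual_boundary_condition} is the structurally correct hypothesis for eliminating the internal error term, rather than any smallness assumption on the perturbation $\overline\alpha$ of the shock position; the remaining task, deferred to section~\ref{sec:convergence_intbc}, is precisely to verify that the vanishing-viscosity adjoint $z$ does satisfy \eqref{dual_boundary_condition}.
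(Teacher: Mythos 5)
Your proof is correct and takes exactly the route the paper intends (the paper gives no separate proof, treating the corollary as immediate): the interior boundary condition \eqref{dual_boundary_condition} makes the coefficient of $\overline\alpha$ in \eqref{fehlerdarstellungmitinternerrb} vanish identically, and the remaining terms are the claimed representation. One caveat on your aside: replacing $J$ by $\JJ$ \emph{throughout} is not justified, since $J(w)=\JJ(w)$ holds because the exact solution $w$ satisfies Rankine--Hugoniot, but $J(v)=\JJ(v)-z_\alpha^T[f(v)]$ with $[f(v)]$ generally nonzero (it is only $O(\nu)$, cf.\ \eqref{lin_F}), which is precisely why the corollary is stated in terms of the modified functional $J$.
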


\subsection{Interior Boundary Condition for the Euler Equations}
A prototype of (\ref{eq:conservation_law}) with $d = 3$ are the steady-state quasi one-dimensional Euler equations, which are a model for compressible nozzle flow. 
$w$, $f$ and $S$ are defined as 
\begin{align} \notag
	      w &= (\rho, \rho u, E)^T, \\
	      \label{eq:eulerfluxes}
	      f(w) &= (\rho u, \rho u^2 + p, u(E+p))^T, \\ \notag
	      S(w) &= \frac{A'}{A} (\rho u, \rho u^2, u(E+p))^T,
\end{align}
respectively. 
The conservative variables $\rho, \rho u, E$ are density, momentum (which equals density times velocity) and total energy. Furthermore, $A \equiv A(x)$ describes the nozzle geometry (assumed to be rotational-symmetric, so $A(x)$ does in fact describe the diameter) and
\begin{align} 
  \label{pressure} p(w) := (\gamma-1)(E-\frac{1}{2}  \rho u^2) 
\end{align} 
is the pressure, where we have used a specific equation of state for $p$ that holds for a polytropic ideal gas, and $\gamma$ is the ratio of specific heats,  a gas-specific constant, which takes $\gamma = 1.4$ for an ideal di-atomic gas, of which air is a specific example.

Boundary conditions $\Ug$ can, for example, be set as
\begin{align} 
  \label{qee_bdry}
  0 = \Ug(w):= \begin{cases} p - p_0 & \text{on the outflow boundary}  \\ 
		  (s,h) - (s_0,h_0) & \text{on the inflow boundary} \end{cases}
\end{align} 
where $s = \alpha_0 \log(\frac{p}{\rho^{\gamma}}) + \alpha_1$ denotes entropy and $h = \frac{c^2}{\gamma - 1} + \frac{u^2}{2}$ total enthalpy,  i.e., one prescribes the pressure $p_0$ at the outflow, enthalpy $h_0$ and entropy $s_0$ at the inflow. Here $\alpha_i$ are constants, and $c$ denotes the speed of sound. 

We consider the target functional as in \eqref{intbcfunct}, where $p$ denotes pressure as defined in \eqref{pressure}.
For the Euler equations, i.e., eq. (\ref{eq:conservation_law}) with $f$ and $S$ defined as in (\ref{eq:eulerfluxes}), one can make (\ref{dual_boundary_condition}) more explicit as follows (of course, this has already been done, for example in \cite{GP97}): 
Due to the underlying equation (\ref{eq:conservation_law}), we have for $w = (w_1, w_2, w_3) = (\rho, \rho u, E)$
\begin{align}
      [f(w)_x] &= -[S(w)], \\
      [S(w)] &= \frac{A'(\alpha)}{A(\alpha)} ([\rho u], [\rho u^2], [u(E+p(w))])
\intertext{and due to Rankine-Hugoniot, we have that }
      [f(w)] &= ([\rho u], [\rho u^2 + p(w)], [u(E+p(w))]) = 0 
\intertext{which yields }
      [S(w)] &= \frac{A'(\alpha)}{A(\alpha)} (0, [\rho u^2], 0). 
\intertext{Substituting all this information into the interior boundary condition (\ref{dual_boundary_condition}), we get for $z = (z_1, z_2, z_3)^T$}
      z_2(\alpha) \frac{-A'(\alpha)}{A(\alpha)} [\rho u^2] &= -[p(w)],
\intertext{which yields}
      z_2(\alpha) &= \frac{A(\alpha)}{A'(\alpha)}\frac{[p(w)]}{[\rho u^2]}. 
\intertext{Again, thanks to Rankine-Hugoniot, we have }
      [\rho u^2] &= -[p(w)],
\intertext{which in all yields the internal adjoint boundary condition for the Euler equations, }
\label{internal_boundary} z_2(\alpha) &= - \frac{A(\alpha)}{A'(\alpha)} .
\end{align} 

Usually, (\ref{internal_boundary}) is of course not enforced in a numerical procedure, as for example $\alpha$ is in general not known. Due to the fact that numerical schemes in general approximate the solution $w$ by a viscous regularization, it has been argued that neglecting (\ref{internal_boundary}) is reasonable, because in the vanishing viscosity limit, $z$ is supposed to fulfill (\ref{internal_boundary}). This, however, has to our knowledge not been proven. 
In the following section, we therefore show that, assuming the adjoint solution can be seen as a limit of a viscous adjoint (to be defined below), the exact adjoint fulfills the interior boundary condition.

\section{Convergence of the Interior Boundary Condition}
\label{sec:convergence_intbc}
In this section, we assume that the exact adjoint solution can be given as the small-viscosity limit of a viscous adjoint solution. This is a reasonable assumption as already indicated in \cite{BJ98}. Using a viscosity parameter $\varepsilon > 0$, the viscous primal equation can be written as
\begin{align}
\label{visc1p}
 f(w^{\varepsilon})_x + S(w^{\varepsilon}) &= \varepsilon w^{\varepsilon}_{xx}
\end{align}
including again boundary conditions which are not relevant to this investigation. Standard theory \cite{Oleinik63} shows that in the scalar case, given that $\varepsilon \rightarrow 0$, one has $\we \rightarrow w$ in $L^1$. 
The corresponding dual equation is then 
\begin{align}
\label{visc1a}
 -f'(w^{\varepsilon})^T z^{\varepsilon}_x + S'(w^{\varepsilon})^T z^{\varepsilon} &= \varepsilon z^{\varepsilon}_{xx} + p'(w^{\varepsilon}).
\end{align}

Standard assumptions, which can be proven in the scalar one-dimensional case, on the behavior of $w^{\varepsilon}$ are that it is smooth all over the domain, albeit having in a transition region $[\alpha^-, \alpha^+] := [\alpha - \overline \alpha, \alpha + \overline \alpha]$ a gradient that scales as $\frac{1}{\varepsilon}$. 
Outside this region, we state that the gradient is of order unity, i.e., its order of magnitude is independent of $\varepsilon$. Here $\overline \alpha$ is a parameter that goes, in dependency of $\varepsilon$, to zero.
We furthermore assume, in the spirit of Tadmor \cite{Tadmor}, that the adjoint solution is Lipschitz-continuous at $x = \alpha$. This is in good agreement with the results found by Giles and Pierce \cite{GP97} for the quasi one-dimensional Euler equations. 
With respect to the interior boundary conditions, it is thus interesting what happens with the expression
\begin{align}
 \label{viscousibc}
 \lim_{\varepsilon \rightarrow 0^+} \left( [p(w^{\varepsilon})] - [(z^{\varepsilon})^T S(w^{\varepsilon})] \right),
\end{align}
which, in the limit, should be equivalent to (\ref{dual_boundary_condition}) and thus yield zero.
Of course, only involving smooth functions, (\ref{viscousibc}) does not make sense unless we define what we mean by a jump. A reasonable definition is
\begin{align}
 \label{def_smooth_jump}
 [w^{\varepsilon}] := \int_{\alpha^-}^{\alpha^+}\left( \frac{d}{dx} w^{\varepsilon} \right) \ dx,
\end{align}
which, if $w^{\varepsilon}$ converges towards a function $w$ that is discontinuous at $x = \alpha$, converges towards the jump of $w$.

Let us state the following theorem:
\begin{satz}\label{satz:intbcconvergiert}
 Given that both $w^{\varepsilon}$ and $z^{\varepsilon}$, solutions to (\ref{visc1p}) and (\ref{visc1a}), respectively, are smooth, and that outside a transition region $[\alpha^-, \alpha^+]$, both $z_{x}^{\varepsilon}$ and $w_x^{\varepsilon}$ have orders of magnitude independent of $\varepsilon$, it holds that 
 \begin{align}
  \lim_{\varepsilon \rightarrow 0^+} \left( [p(w^{\varepsilon}] - [(z^{\varepsilon})^TS(w^{\varepsilon})] \right) = 0.
 \end{align}
\end{satz}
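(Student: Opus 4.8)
The plan is to express both jump terms as integrals over the transition region via the definition \eqref{def_smooth_jump}, and then to use the two viscous equations \eqref{visc1p} and \eqref{visc1a} to show that their difference equals a single total derivative scaled by $\varepsilon$. First I would write
\begin{align*}
  [p(\we)] = \int_{\alpha^-}^{\alpha^+} p'(\we)^T \we_x \dx,
\end{align*}
and substitute $p'(\we)$ using the viscous dual equation \eqref{visc1a}, i.e.\ $p'(\we) = -f'(\we)^T \ze_x + S'(\we)^T \ze - \varepsilon \ze_{xx}$. Transposing, this splits $[p(\we)]$ into three integrals with integrands $-(\ze_x)^T f'(\we)\we_x$, $(\ze)^T S'(\we)\we_x$, and $-\varepsilon (\ze_{xx})^T \we_x$.

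In the first integral I would use the chain rule $f'(\we)\we_x = f(\we)_x$ and then the viscous primal equation \eqref{visc1p} in the form $f(\we)_x = \varepsilon \we_{xx} - S(\we)$, producing $\int (\ze_x)^T S(\we)\dx - \varepsilon \int (\ze_x)^T \we_{xx}\dx$. The second integral, using $S'(\we)\we_x = \tfrac{d}{dx}S(\we)$, becomes $\int (\ze)^T \tfrac{d}{dx}S(\we)\dx$. The key observation is then that $\int (\ze_x)^T S(\we)\dx + \int (\ze)^T \tfrac{d}{dx}S(\we)\dx = \int \tfrac{d}{dx}\bigl((\ze)^T S(\we)\bigr)\dx = [(\ze)^T S(\we)]$ by the product rule. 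Collecting everything leaves
\begin{align*}
  [p(\we)] - [(\ze)^T S(\we)] = -\varepsilon \int_{\alpha^-}^{\alpha^+} \Bigl( (\ze_x)^T \we_{xx} + (\ze_{xx})^T \we_x \Bigr) \dx.
\end{align*}

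The remaining integrand is again a perfect derivative, $(\ze_x)^T \we_{xx} + (\ze_{xx})^T \we_x = \tfrac{d}{dx}\bigl((\ze_x)^T \we_x\bigr)$, so the difference collapses to a boundary term,
\begin{align*}
  [p(\we)] - [(\ze)^T S(\we)] = -\varepsilon \Bigl[ (\ze_x)^T \we_x \Bigr]_{\alpha^-}^{\alpha^+}.
\end{align*}
I would finish by invoking the hypothesis: the endpoints $\alpha^\pm$ sit at the boundary of the transition region, where both $\ze_x$ and $\we_x$ are of order of magnitude independent of $\varepsilon$, so $(\ze_x)^T\we_x$ is $O(1)$ there. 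Hence the right-hand side is $O(\varepsilon)$ and vanishes as $\varepsilon \to 0^+$.

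The step I expect to be the main obstacle is controlling the viscous remainder. A naive pointwise bound fails badly: inside the transition region $\ze_x$ and $\we_x$ each scale like $1/\varepsilon$, so $\varepsilon (\ze_x)^T\we_{xx}$ could be as large as $1/\varepsilon^2$. The entire argument hinges on recognizing that these dangerous terms assemble into the exact total derivative $\varepsilon \tfrac{d}{dx}\bigl((\ze_x)^T\we_x\bigr)$, which integrates to an evaluation only at $\alpha^\pm$ — precisely the points where the hypothesis furnishes $O(1)$ gradients, so the interior blow-up never enters the estimate. The crux is therefore verifying that the substitutions from the primal and dual equations conspire to leave exactly this one total derivative, with no uncancelled interior term surviving.
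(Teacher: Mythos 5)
Your proof is correct and takes essentially the same route as the paper's: both express the jumps as integrals over the transition region via \eqref{def_smooth_jump}, substitute the viscous dual equation \eqref{visc1a} and then the viscous primal equation \eqref{visc1p}, and reduce everything to the single boundary term $-\varepsilon\bigl[(\ze_x)^T\we_x\bigr]_{\alpha^-}^{\alpha^+}$, which is $O(\varepsilon)$ by the hypothesis that the gradients are of order one at the edges of the transition region. The only cosmetic difference is that you assemble the two $\varepsilon$-terms into the perfect derivative $\varepsilon\tfrac{d}{dx}\bigl((\ze_x)^T\we_x\bigr)$, whereas the paper performs the equivalent integration by parts on $\varepsilon(\ze_{xx})^T\we_x$ before invoking the primal equation.
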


\begin{proof} The proof exploits both the equations defining $w^{\varepsilon}$ and $z^{\varepsilon}$, and can in principle in a straightforward manner be written as
\begin{align*}
 [p(w^{\varepsilon})] - [(z^{\varepsilon})^T S(w^{\varepsilon})] 
	&\stackrel{(\ref{def_smooth_jump})} = \int_{\alpha^-}^{\alpha^+} \frac{d}{\dx} \left( p(w^{\varepsilon}) - (z^{\varepsilon})^T S(w^{\varepsilon}) \right) \dx \\
					&\stackrel{\hphantom{(8.29)}}= \int_{\alpha^-}^{\alpha^+} \left( p'(w^{\varepsilon}) - S'(w^{\varepsilon})^T z^{\varepsilon} \right) w^{\varepsilon}_x - (z^{\varepsilon}_x)^T S(w^{\varepsilon}) \dx \\
					&\stackrel{(\ref{visc1a})}= \int_{\alpha^-}^{\alpha^+} \left( -f'(w^{\varepsilon})^T z^{\varepsilon}_x - \varepsilon z^{\varepsilon}_{xx} \right) w^{\varepsilon}_x - (z^{\varepsilon}_x)^T S(w^{\varepsilon}) \dx \\
					&\stackrel{\hphantom{(8.29)}}= \int_{\alpha^-}^{\alpha^+} (z^{\varepsilon}_x)^T \left( -f(w^{\varepsilon})_x + \varepsilon w^{\varepsilon}_{xx} - S(w^{\varepsilon}) \right) \dx - [\varepsilon (z^{\varepsilon}_x)^T w^{\varepsilon}_x]_{\alpha^-}^{\alpha^+} \\
					&\stackrel{(\ref{visc1p})}= -\varepsilon [(z^{\varepsilon}_x)^T w^{\varepsilon}_x]_{\alpha^-}^{\alpha^+} = O(\varepsilon).
\end{align*}
Because we are outside the transition region, the term $[(z^{\varepsilon}_x)^T w^{\varepsilon}_x]$ scales independently of $\varepsilon$. This proves our claim.
\end{proof}

\begin{korollar}
 Under the assumptions that both $w^{\varepsilon}$ and $z^{\varepsilon}$ converge towards $w$ and $z$ pointwise, $z$ fulfills the interior boundary condition in the sense of definition \ref{def:ib}, given that the conditions of Theorem \ref{satz:intbcconvergiert} are met. 
\end{korollar}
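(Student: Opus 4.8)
The plan is to take the vanishing-viscosity limit in the identity already supplied by Theorem~\ref{satz:intbcconvergiert} and then rewrite the resulting relation by means of the balance law so that it reads exactly as the interior boundary condition~\eqref{dual_boundary_condition}. Theorem~\ref{satz:intbcconvergiert} furnishes
\[
  \lim_{\varepsilon \rightarrow 0^+} \left( [p(\we)] - [(\ze)^T S(\we)] \right) = 0,
\]
so the entire content of the corollary lies in evaluating the two limits \emph{separately} in terms of the limit functions $w$ and $z$. The point is that pointwise convergence of $\we \to w$ and $\ze \to z$ lets us identify $\lim_{\varepsilon\to 0^+}[p(\we)]$ with $[p(w)]$ and $\lim_{\varepsilon\to 0^+}[(\ze)^T S(\we)]$ with $z^T(\alpha)[S(w)]$; once this is done, a one-line manipulation with the balance law closes the argument.

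Concretely, I would first use the definition~\eqref{def_smooth_jump} together with the fundamental theorem of calculus to rewrite each smooth jump as a difference of endpoint values, $[p(\we)] = p(\we(\alpha^+)) - p(\we(\alpha^-))$ and likewise $[(\ze)^T S(\we)] = (\ze(\alpha^+))^T S(\we(\alpha^+)) - (\ze(\alpha^-))^T S(\we(\alpha^-))$, where $\alpha^{\pm} = \alpha \pm \overline{\alpha}$. Since $\overline{\alpha}\to 0$ as $\varepsilon\to 0^+$, both endpoints collapse onto $\alpha$. Combining this with the pointwise convergence $\we\to w$ and the existence of the one-sided limits $w^{\pm}$, and using smoothness of $p$ and $S$, one obtains $[p(\we)]\to p(w^+)-p(w^-)=[p(w)]$. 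For the cross term I would invoke the standing assumption that $z$ is Lipschitz-continuous at $\alpha$, so that $\ze(\alpha^{\pm})\to z(\alpha)$ from \emph{both} sides; the jump then factors in the limit as
\[
  [(\ze)^T S(\we)] \;\longrightarrow\; z(\alpha)^T S(w^+) - z(\alpha)^T S(w^-) = z^T(\alpha)\,[S(w)].
\]
Substituting these two limits back gives $[p(w)] - z^T(\alpha)[S(w)] = 0$.

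It then remains to convert this into the form of Definition~\ref{def:ib}. Differentiating the balance law~\eqref{eq:conservation_law} satisfied by $w$ and taking the jump across $\alpha$ yields $[f(w)_x] = -[S(w)]$, exactly as recorded in the Euler computation of Section~\ref{sec:adjoint_shock}. Inserting this relation turns $[p(w)] - z^T(\alpha)[S(w)] = 0$ into $z^T(\alpha)[f(w)_x] = -[p(w)]$, which is precisely the interior boundary condition~\eqref{dual_boundary_condition}. I expect the main obstacle to be the justification of Step two, namely interchanging the limit $\varepsilon\to 0^+$ with the jump operation~\eqref{def_smooth_jump}: the nonlinear cross term $(\ze)^T S(\we)$ is evaluated at the \emph{moving} endpoints $\alpha^{\pm}(\varepsilon)$, so plain pointwise convergence at fixed points does not by itself guarantee $\we(\alpha^{\pm})\to w^{\pm}$ or $\ze(\alpha^{\pm})\to z(\alpha)$. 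The Lipschitz hypothesis on $z$ at $\alpha$ is what supplies the uniform control of the variation of $\ze$ across the collapsing transition region, allowing $\ze$ to be replaced to leading order by its limiting value $z(\alpha)$ and thereby letting the jump factor as $z(\alpha)^T[S(w)]$.
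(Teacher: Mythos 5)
Your proof is correct and follows exactly the route the paper intends: the paper states this corollary without proof, treating it as immediate from the remark after \eqref{def_smooth_jump} (that the smooth jump converges to the jump of the limit), the factoring of $z^T(\alpha)$ via the assumed Lipschitz continuity of the adjoint, and the identity $[f(w)_x]=-[S(w)]$ obtained by jumping the balance law \eqref{eq:conservation_law} across $\alpha$ --- precisely the three steps you carry out. Your closing observation about the moving endpoints $\alpha^{\pm}(\varepsilon)$ is a genuine subtlety the paper glosses over; note that for $\we$ the needed uniform control comes from the theorem's hypothesis that $w^{\varepsilon}_x$ is $O(1)$ outside the transition region, which complements the Lipschitz argument you give for $\ze$.
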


\section{Conclusions and Outlook}
We have given a general framework for the derivation of the interior boundary condition in the presence of shocks. 
We have furthermore proven that for those stationary balance laws whose solutions can be defined as small-viscosity limits, the adjoint equation fulfills this interior boundary condition. 
In particular, this might explain why low-order, i.e., very diffusive schemes, have no problem in converging towards the correct adjoint solution. It does, however, not explain why some schemes need diffusion over-proportional to the mesh size (see \cite{GU08, GU08Teil2} in the context of time-dependent equations). Numerical evidence shows \cite{SMN10} that this problem also occurs in the steady-state case we are considering here. It might be worth applying our concepts to numerical schemes to get more insight into the behavior of the numerical adjoint procedure. 

We see no major obstacles in carrying out the analysis of section \ref{sec:adjoint_shock}  for both multi-dimensional and time-dependent applications. Conceptually, it should be straightforward, although technically more involved as one has to account for merging and forming shocks and multi-dimensional effects. The consistent augmentation of the functional still relies on Rankine-Hugoniot's condition, in the time-dependent as well as in the steady-state case. Also in those settings, one derives interior boundary conditions similar to the one given in definition \ref{def:ib} along the shock-curve. One non-trivial point, however, is to prove Theorem \ref{satz:intbcconvergiert} again in this setting.

We are aware that definition \ref{defsufficientlysmooth} only holds in very special cases and does not, e.g., apply to viscous approximations of conservation laws. Ongoing work is concerned with the extension of this framework to smooth functions with a steep gradient. This, however, needs a different analysis and is beyond the scope of this paper.

\bibliographystyle{abbrv}
\bibliography{087033}

\begin{thebibliography}{10}

\bibitem{AN}
J.~D. Anderson.
\newblock {\em Fundamentals of Aerodynamics}.
\newblock McGraw-Hill New York, {$3^{rd}$} edition, 2001.

\bibitem{BR97}
R.~Becker and R.~Rannacher.
\newblock Weighted a posteriori error control in {FE} methods.
\newblock In {\em Proc. ENUMATH-97}, 1997.

\bibitem{BJ98}
F.~Bouchut and F.~James.
\newblock One-dimensional transport equations with discontinuous coefficients.
\newblock {\em Nonlinear Analysis, Theory, Methods and Applications},
  32:891--933, 1998.

\bibitem{GU08}
M.~Giles and S.~Ulbrich.
\newblock {Convergence of Linearized and Adjoint Approximations for
  Discontinuous Solutions of Conservation Laws. Part 1: Linearized
  Approximations and Linearized Output Functionals}.
\newblock {\em SIAM J. Numer. Anal.}, 48:882--904, 2010.

\bibitem{GU08Teil2}
M.~Giles and S.~Ulbrich.
\newblock Convergence of linearized and adjoint approximations for
  discontinuous solutions of conservation laws. {P}art 2: {A}djoint
  approximations and extensions.
\newblock {\em SIAM J. Numer. Anal.}, 48(3):905--921, 2010.

\bibitem{GP97}
M.~B. Giles and N.~A. Pierce.
\newblock Adjoint equations in {CFD}: duality, boundary conditions and solution
  behaviour.
\newblock {\em AIAA Paper 97-1850}, 1997.

\bibitem{GP00}
M.~B. Giles and N.~A. Pierce.
\newblock Analytic adjoint solutions for the quasi-one-dimensional {Euler}
  equations.
\newblock {\em Journal of Fluid Mechanics}, 426:327--345, 2001.

\bibitem{RHa06}
R.~Hartmann.
\newblock Adaptive {Discontinuous Galerkin} methods with shock-capturing for
  the compressible {Navier-Stokes} equations.
\newblock {\em International Journal for Numerical Methods in Fluids},
  51:1131--1156, 2006.

\bibitem{Oleinik63}
O.~A. Ole{\u\i}nik.
\newblock Discontinuous solutions of non-linear differential equations.
\newblock {\em Amer. Math. Soc. Transl. (2)}, 26:95--172, 1963.

\bibitem{BalanceLaws10}
G.~Russo and G.~Puppo.
\newblock {\em Numerical methods for balance laws}.
\newblock Quaderni di matematica. Aracne, 2010.

\bibitem{SMN10}
J.~Sch\"utz, G.~May, and S.~Noelle.
\newblock {\em Analytical and numerical investigation of the influence of
  artificial viscosity in {Discontinuous Galerkin} methods on an adjoint-based
  error estimator}, chapter Computational Fluid Dynamics 2010, pages 203--209.
\newblock Springer, 2010.

\bibitem{Tadmor}
E.~Tadmor.
\newblock Local error estimates for discontinuous solutions of nonlinear
  hyperbolic equations.
\newblock {\em SIAM J. Numer. Anal.}, 28:891--906, 1991.

\bibitem{VD2000}
D.~A. Venditti and D.~L. Darmofal.
\newblock Adjoint error estimation and grid adaptation for functional outputs:
  Application to quasi-one-dimensional flow.
\newblock {\em Journal of Computational Physics}, 164:204--227, 2000.

\end{thebibliography}

\end{document}